\documentclass[12pt]{article}
\usepackage[utf8]{inputenc}

\usepackage{amsfonts}
\usepackage{amssymb}
\usepackage{amsmath}
\usepackage{amsthm}
\usepackage{amscd}
\usepackage{mathrsfs}
\usepackage[T2A]{fontenc}

\usepackage[english]{babel}

\usepackage{amsthm}

\def \le {\leqslant}
\def \ge {\geqslant}

\topmargin -2cm \advance\voffset by 0in

\oddsidemargin -1cm \hoffset 0cm

\textheight = 24.5cm \textwidth 18.0cm \columnsep 10pt \columnseprule 0pt 

\theoremstyle{plain}

\newtheorem{lem}{Lemma}[section]

\newtheorem{theor}{Theorem}

\addto\captionsrussian{}
\addto\captionsrussian{}
\newcommand\blfootnote[1]{%
  \begingroup
  \renewcommand\thefootnote{}\footnote{#1}%
  \addtocounter{footnote}{-1}%
  \endgroup
}

\begin{document}

\blfootnote{\textit{2010 Mathematics Subject Classification}:11J06.}
\blfootnote{\textit{Key words and phrases}:  Minkowski question mark function, Continued fractions.}
\begin{center}
\textsc{\Large On the derivative of the Minkowski question-mark function}\\ 
\,
\,
\large Dmitry Gayfulin\footnote[1]{Research is supported by RNF grant No. 19-11-00001.}
\end{center}

\begin{abstract}
\noindent
The Minkowski question-mark function $?(x)$ is a continuous strictly increasing function defined on $[0,1]$ interval. It is well known fact that the derivative of this function, if exists, can take only two values: $0$ and $+\infty$. It is also known that the value of the derivative $?'(x)$ at the point $x=[0;a_1,a_2,\ldots,a_t,\ldots]$ is connected with the limit behavior of the arithmetic mean $(a_1+a_2+\ldots+a_t)/t$. Particularly, N. Moshchevitin and A. Dushistova showed that if $a_1+a_2+\ldots+a_t<\kappa_1 t$, where $\kappa_1 = 2\log\bigl({\frac{1+\sqrt{5}}{2}}\bigr)/\log{2}= 1.3884\ldots$, then $?'(x)=+\infty$. They also proved that the constant $\kappa_1$ is non-improvable. We consider a dual problem: how small can be the quantity $a_1+a_2+\ldots+a_t-\kappa_1 t$ if $?'(x)=0$? We obtain the non-improvable estimates of this quantity.
\end{abstract}

\section{Introduction}
\subsection{The Minkowski function $?(x)$.}
For an arbitrary $x\in [0,1]$ we consider its continued fraction expansion
$$
x=[0;a_1,a_2,\ldots,a_n,\ldots]=  \cfrac{1}{a_1+\cfrac{1}{a_2+\cdots}}, \,\,\, a_j \in \mathbb{Z}_+
$$
with natural partial quotients $a_t$. This representation is infinite when $x\not\in\mathbb{Q}$ and finite for rational $x$. 
For irrational numbers the continued fraction representation is unique, however each rational $x$ has two different representations
$$
x=[0;a_1,a_2,\ldots,a_{n-1}, a_n]  
\,\,\,\,\,\text{and}\,\,\,\,\,
x=[0;a_1,a_2,\ldots,a_{n-1}, a_n-1,1]
,
\,\,\,\, \text{where}\,\,\,\, a_n \ge 2
.
$$
By 
$$\frac{p_k}{q_k}:=[0;a_1,\ldots,a_k]$$ 
we denote the $k$th convergent fraction to $x$.
By $B_n$ we denote the $n$th level of the Stern-Brocot tree, that is $$B_n:=\{ x=[0;a_1,\ldots,a_k]:a_1+\ldots+a_k=n+1\}.$$
In \cite{Mink} Minkowski introduced the function $?(x)$ which may be defined as the limit  distribution function
of sets $B_n$. This function was rediscovered several times and studied by many authors (see \cite{kinney},\cite{kesse},\cite{alkauskas},\cite{MD},\cite{paradis}).
For irrational $x=[0;a_1,a_2,\ldots,a_n,\ldots]$   the formula
\begin{equation} \label{eq:1}
?(x)=\sum\limits_{k=1}^{\infty} \frac{(-1)^{k+1}}{2^{a_1+\ldots+a_k-1}}
\end{equation}
introduced by Denjoy \cite{Den,Den1} and Salem \cite{Salem}
may be considered as one of the equivalent definitions of the function $?(x)$. If $x$ is rational, then the infinite series in (\ref{eq:1}) is replaced by a finite sum. Note that $?([0;a_1,\ldots,a_t+1])=?([0;a_1,\ldots,a_t,1])$ and hence $?(x)$ is well-defined for rational numbers too. It is known that $?(x)$ is a continuous strictly increasing function, also its derivative $?'(x)$, if exists, can take only two values -- $0$ and $+\infty$. Almost everywhere in [0; 1] in the sense of Lebesgue measure the derivative exists and equals $0$. If $x$ is rational, then $?'(x)$ also exists and equals zero.

\subsection{Notation and parameters}
We will denote the sequences by capital letters $A, B, C$ and their elements by the corresponding small letters $a_i, b_j,c_k$. All sequences of the present paper contain positive integers unless otherwise stated. For an arbitrary finite sequence $B=(b_1,b_2,\ldots,b_n)$ we denote
$$
\overleftarrow{B}=(b_n,b_{n-1},\ldots,b_1),\quad  S(B)=\sum\limits_{i=1}^{n}b_i.
$$
By $\langle A\rangle$ we denote the \textit{continuant} of (possibly empty) finite sequence $A=(a_1,\ldots,a_t)$. It is defined as follows: the continuant of the empty sequence equals $1$, $\langle a_1\rangle=a_1$, if $t\ge 2$ one has
\begin{equation}
\label{contrule}
\langle a_1,a_2,\ldots,a_t\rangle=a_t\langle a_1,a_2,\ldots,a_{t-1}\rangle+\langle a_1,a_2,\ldots,a_{t-2}\rangle.
\end{equation}
Note that the finite continued fraction $[0;a_1,\ldots,a_t]$ can be expressed using continuants:
\begin{equation}
[0;a_1,\ldots,a_t]=\frac{\langle a_2,\ldots,a_t\rangle}{\langle a_1,a_2,\ldots,a_t\rangle}.
\end{equation}
Rule (\ref{contrule}) can be generalized as follows:
\begin{equation}
\begin{split}
\label{contrulegen}
\langle a_1,a_2,\ldots,a_t,a_{t+1},\ldots,a_s\rangle=\langle a_1,a_2,\ldots,a_{t}\rangle\langle a_{t+1},\ldots,a_{s}\rangle+
\langle a_1,a_2,\ldots,a_{t-1}\rangle\langle a_{t+2},\ldots,a_{s}\rangle=\\
=\langle a_1,a_2,\ldots,a_{t}\rangle\langle a_{t+1},a_{t+2},\ldots,a_{s}\rangle(1+[0;a_{t-1},a_{t-2},\ldots,a_1][0;a_{t+2},a_{t+3},\ldots,a_s]).
\end{split}
\end{equation}
One can find more about the properties of continuants in \cite{Kan}.

For an irrational $x=[0;a_1,a_2,\ldots,a_n,\ldots]$ we consider the sum $S_x(t)$ of its partial quotients up to $t-$th:
$$
S_x(t)=a_1+a_2+\ldots+a_t.
$$
For an arbitrary finite sequence $B$ we will denote by $S(B)$ and $\Pi(B)$ the sum and the product of its elements respectively. 

Throughout the paper we always denote the sequence of partial quotients of $x$ by $a_1,a_2,\ldots,a_t,\ldots$ unless otherwise stated. We will also denote the sequence of $t$ first elements of this infinite sequence by $A_t$. Thus, $S_x(t)=S(A_t)$.

We also need the following constants
\begin{equation}
\label{Phikappa1}
\Phi = \frac{1+\sqrt{5}}{2}, \quad
\kappa_1 = \frac{2\log{\Phi}}{\log{2}}= 1.3884\ldots,
\end{equation}
\begin{equation}
\label{lamun}
\lambda_n = \frac{n+\sqrt{n^2+4}}{2}, 
\end{equation}
\begin{equation}
\label{kappa2}
\kappa_2 = \frac{4\log\lambda_5-5\log\lambda_4}{\log \lambda_5-\log\lambda_4-\log\sqrt{2}}=4.401\ldots ,
\end{equation}
\begin{equation}
\label{kappa4}
\kappa_4 =\sqrt{\frac{\kappa_1-1}{\log 2}}= 0.7486\ldots .
\end{equation} 
For an arbitrary sequence $A$ of length $t$ we denote $S(A)-\kappa_1t$ by $\varphi^{(1)}(A)$. For $x=[0;a_1,\ldots,a_t,\ldots]$ we denote $\varphi^{(1)}(A_t)$ by $\varphi^{(1)}_{x}(t)$.
\subsection{Critical values}
In the paper \cite{PaViBi} it was shown by J. Paradis, P. Viader, L. Bibiloni that the value of the derivative of the function $?(x)$ is connected with the limit behavior of $\frac{S_x(t)}{t}$. They showed that if $\frac{S_x(t)}{t}<\kappa_1$ and $?'(x)$ exists, then $?'(x)=+\infty$. On the other side, denote by $z\approx 5.319$ is the root of the equation $2\log(1+z)=z\log{2}$. If $\frac{S_x(t)}{t}\ge z$ and $?'(x)$ exists, then $?'(x)=0$. 

In the paper \cite{MD} A. Dushistova and N. Moshchevitin improved the results of \cite{PaViBi} and formulated the following two theorems.
\\
{\bf Theorem A.} \,\,\,
\\
\textit{(i) 
Let for real irrational $x\in(0, 1)$  the inequality $S_x(t)<\kappa_1t$ hold for all $t$ large enough. Then the derivative $?'(x)$ exists and equals $+\infty$.\\
(ii) For any positive $\varepsilon$ there exists an irrational number ${\bf x}\in (0,1)$, such that $?'(x)=0$ and the inequality $S_{x}(t)<(\kappa_1+\varepsilon)t$ holds for all $t$ large enough.
}
\\
{\bf Theorem B.} \,\,\,
\\
\textit{(i) 
Let for real irrational $x\in(0, 1)$  the inequality $S_x(t)>\kappa_2t$ hold for all $t$ large enough. Then the derivative $?'(x)$ exists and equals $0$.\\
(ii) For any positive $\varepsilon$ there exists an irrational number $x\in (0,1)$, such that $?'(x)=+\infty$ and the inequality $S_{x}(t)>(\kappa_2-\varepsilon)t$ holds for all $t$ large enough.
}

One can see that the constants $\kappa_1$ and $\kappa_2$ in theorems A and B are non-improvable.
\subsection{The dual problem}
In the paper \cite{MDK} the dual problem was considered. Suppose that $?'(x)=0$.  How small can be the difference $\varphi^{(1)}_{x}(t)=S_x(t)-\kappa_1t$? Statement (ii) of Theorem $A$ implies that $\varphi^{(1)}_{x}(t)$ can be less than $\varepsilon t$ for any positive $\varepsilon$. The first non-trivial estimate of $\varphi^{1}_{x}(t)$ was obtained in \cite{MDK} by A. Dushistova, I. Kan and N. Moshchevitin.\\
{\bf Theorem C.} \,\,\,
\\
\textit{(i) Let for irrational $x\in(0, 1)$  the derivative $?'(x)$ exists and $?'(x)=0$. Then for any $\varepsilon>0$ for all $t$ large enough one has
\begin{equation}
\label{k4kanest1}
\max_{u\le t}\varphi^{(1)}_{x}(u)=\max_{u\le t}\left(S_{x}(u)-\kappa_1 u\right) \ge (\kappa_4-\varepsilon)\sqrt{t\log t}.
\end{equation}
(ii) There exists an irrational ${\bf x}\in (0,1)$, such that $?'({\bf x})=0$ and or any $\varepsilon>0$ for all $t$ large enough one has
\begin{equation}
\label{k4kanest2}
\varphi^{(1)}_{x}(t)=S_{x}(t)-\kappa_1 t \le (2\sqrt{2+\varepsilon})\kappa_4 \sqrt{t\log t}.
\end{equation}
}
In the paper \cite{KanGay2} a strengthened version of the inequality (\ref{k4kanest1}) was obtained within the same condition
\begin{equation}
\label{k4kanestbetter}
\max_{u\le t}\varphi^{(1)}_{x}(u) \ge \biggl(\frac{2}{\sqrt{3}}+\varepsilon\biggr)\kappa_4\sqrt{t\log t}.
\end{equation}
Of course, one can ask the same question when $?'(x)=+\infty$.  How small can be the difference $\varphi^{(2)}_{x}(t):=\kappa_2t-S_x(t)$? The first result in this area was also obtained in \cite{MDK}. It was improved several times and for now the best known estimates are the following:\\
{\bf Theorem D.}\cite{KanGay1} \,\,\,
\\
\textit{(i) Let for real irrational $x\in(0, 1)$  the derivative $?'(x)$ exists and $?'(x)=+\infty$. Then for all $t$ large enough one has
\begin{equation}
\label{k5kanest1}
\max_{u\le t}\left(\kappa_2 u-S_{x}(u)\right) \ge 0.06222\sqrt{t}.
\end{equation}
(ii) There exists an irrational $x\in (0,1)$, such that $?'(x)=+\infty$ and for all $t$ large enough one has
\begin{equation}
\label{k5kanest2}
\kappa_2 t-S_{x}(t) \le 0.26489\sqrt{t}.
\end{equation}
}
\section{Main results}
Note that there are different quantities on the left-hand sides of the inequalities (\ref{k4kanest1}) and (\ref{k4kanest2}). One can say that the inequality  (\ref{k4kanest1}) considers the uniform behavior of $\varphi^{(1)}_{x}(t)$, whereas the inequality (\ref{k4kanest2}) deals with the local behavior of this quantity. In the present paper we consider upper and lower estimates in both cases. Our first theorem states that statement (ii) of Theorem C is non-improvable.
\begin{theor}
\label{maintheor1}
Let for irrational $x\in(0, 1)$  the derivative $?'(x)$ exists and $?'(x)=0$. Then for any $\varepsilon>0$ for infinitely many $t$ one has
\begin{equation}
\label{mainineqth1}
\varphi^{(1)}_{x}(t)\ge (2\sqrt{2}-\varepsilon)\kappa_4\sqrt{t\log t}.
\end{equation}
\end{theor}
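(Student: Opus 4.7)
The plan is to translate the hypothesis $?'(x)=0$ into a quantitative statement on continuants and then argue by contradiction. The Denjoy--Salem formula (\ref{eq:1}) gives $|?(p_t/q_t)-?(x)| \asymp 2^{1-S_x(t)}$, while the standard estimate $|x-p_t/q_t|\asymp (q_t q_{t+1})^{-1}$ implies that $?'(x)=0$ is equivalent to
\begin{equation*}
\Psi_x(t):=S_x(t)\log 2-2\log q_t\;\longrightarrow\;+\infty\qquad(t\to\infty).
\end{equation*}
Setting $\eta_x(t):=\log q_t-t\log\Phi$ and invoking $\kappa_1\log 2=2\log\Phi$ from (\ref{Phikappa1}), one obtains the clean identity
\begin{equation*}
\Psi_x(t)=\varphi^{(1)}_x(t)\log 2-2\eta_x(t).
\end{equation*}
Suppose the theorem fails: for some $\varepsilon>0$, $\varphi^{(1)}_x(t)<(2\sqrt 2-\varepsilon)\kappa_4\sqrt{t\log t}$ for every $t\ge T_0$. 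It then suffices to construct a sequence $t_n\to\infty$ along which $\Psi_x(t_n)$ stays bounded, contradicting the reformulated hypothesis.

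The heart of the argument is therefore a continuant lemma providing, for suitably chosen $t_n$, a sharp lower bound on $\eta_x(t_n)$ in terms of the profile $\{\varphi^{(1)}_x(u):u\le t_n\}$. My approach is to split $A_{t_n}$ into blocks by picking cut-points $0=s_0<s_1<\dots<s_m=t_n$ at near-maximum points of $\varphi^{(1)}_x$. The generalized continuant formula (\ref{contrulegen}) factorizes $\langle A_{t_n}\rangle$ essentially as $\prod_j\langle B_j\rangle$, where $B_j=(a_{s_{j-1}+1},\dots,a_{s_j})$ and each multiplicative correction factor $1+[0;\overleftarrow{B_j}][0;B_{j+1}']$ is bounded above by a universal constant. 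Inside each block, a sharp one-sided bound of the shape $\log\langle B_j\rangle\ge|B_j|\log\Phi+\tfrac{1}{2}(\log 2)\bigl(S(B_j)-\kappa_1|B_j|\bigr)-O(1)$, which I expect to derive from the methods of \cite{MDK,KanGay2}, together with summation over $j$, should yield
\begin{equation*}
\eta_x(t_n)\;\ge\;\tfrac{1}{2}\varphi^{(1)}_x(t_n)\log 2-Cm-O(1).
\end{equation*}
Provided the cut-points can be chosen so that $m=o(\sqrt{t_n\log t_n})$, this forces $\Psi_x(t_n)=O(1)$, contradicting the fact that $\Psi_x\to+\infty$.

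The decisive obstacle will be the sharp calibration of constants in this continuant lemma. The appearance of the specific value $2\sqrt 2\kappa_4$ has to emerge from an optimization: one must position the record-times so that the envelope $(2\sqrt 2-\varepsilon)\kappa_4\sqrt{u\log u}$ is saturated at each $s_j$ while the number of blocks $m$ remains negligible compared to $\sqrt{t\log t}$. This is precisely the variational problem dual to the one solved by the construction behind part~(ii) of Theorem~C, and the matching extremizers should explain why the constant is the same on both sides. Executing this step rigorously requires combining the continuant identities from \cite{Kan} with the quantitative refinements of \cite{MDK} and \cite{KanGay2}; keeping track of all $O(1)$ corrections over a growing number of blocks, without losing the sharp constant, is where the technical heart of the proof lies.
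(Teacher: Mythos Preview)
Your central block inequality is false. Rewriting it using $\kappa_1\log 2=2\log\Phi$, the claim
\[
\log\langle B_j\rangle\ \ge\ |B_j|\log\Phi+\tfrac{1}{2}(\log 2)\bigl(S(B_j)-\kappa_1|B_j|\bigr)-O(1)
\]
collapses to $\langle B_j\rangle\ge C\,\sqrt{2}^{\,S(B_j)}$, i.e.\ exactly the statement that the ratio in Lemma~\ref{derlemsym} is bounded below on each block. Take $B_j=(n)$: then $\langle B_j\rangle=n$ while $\sqrt{2}^{\,S(B_j)}=\sqrt{2}^{\,n}$, so the bound fails by an arbitrary factor. More to the point, the very hypothesis $?'(x)=0$ says, via Lemma~\ref{derlemsym}, that $\langle A_t\rangle/\sqrt{2}^{\,S_x(t)}\to 0$; your scheme would force this ratio to be bounded below, a direct contradiction rather than a usable tool. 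Even accepting the bound, the arithmetic only yields $\Psi_x(t_n)\le Cm$, not $\Psi_x(t_n)=O(1)$, so the step ``$m=o(\sqrt{t_n\log t_n})$ forces $\Psi_x(t_n)=O(1)$'' is a non sequitur.

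The paper's route is structurally different and this is where the constant actually comes from. First one reduces (Lemma~\ref{no2345}) to sequences whose non-unit partial quotients are all $\ge 12$. The continuant lower bound then has the \emph{product} form of Lemma~\ref{prodlem}, $\langle A_t\rangle\ge\tfrac12\Phi^{t}\prod_i(d_{A_t}(i)/4)$, and Lemma~\ref{sumprodlem} converts a product with fixed sum and bounded maximum $\beta$ into $\beta^{\lfloor s/\beta\rfloor}$. Splitting $[1,t_0]$ into geometrically shrinking blocks $t_i=\lambda^i t_0$ and writing $M_k=c_k\sqrt{t_{k-1}\log t_0}$ for the block maxima gives, after cancelling the $\Phi^t$ against $\sqrt{2}^{\kappa_1 t}$, the inequality (\ref{bigsumfullsimple}). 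The sharp constant $2\sqrt 2\kappa_4$ then emerges from the purely combinatorial Lemma~\ref{sumlem}: minimizing $\max_i\bigl((1-\lambda)\sum_{k>i}\sqrt\lambda^{\,k-i}/c_k+\eta c_i\bigr)$ over positive $c_k$ gives $\sqrt{8\eta}(1+o(\varepsilon))$, and $\sqrt{8\eta}\cdot\alpha=\sqrt{8\alpha}=2\sqrt2\,\kappa_4$ since $\alpha=(\kappa_1-1)/\log 2=\kappa_4^2$. Your proposal contains no analogue of this optimization, and without it there is no mechanism for $2\sqrt 2$ to appear.
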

Our second theorem provides optimal estimate of the uniform behavior of $\varphi^{(1)}_{x}(t)$.
\begin{theor}
\label{maintheor2}
(i)Let for irrational $x\in(0, 1)$  the derivative $?'(x)$ exists and $?'(x)=0$. Then for any $\varepsilon>0$ for all $t$ large enough one has
\begin{equation}
\label{mainineqth21}
\max_{u\le t}\varphi^{(1)}_{x}(u)\ge (\sqrt{2}-\varepsilon)\kappa_4\sqrt{t\log t}.
\end{equation}
(ii) For any $\varepsilon>0$ there exists an irrational $x\in (0,1)$ such that $?'(x)=0$ and for infinitely many $t$ one has
 \begin{equation}
\label{mainineqth22}
\max_{u\le t}\varphi^{(1)}_{x}(u)\le (\sqrt{2}+\varepsilon)\kappa_4\sqrt{t\log t}.
\end{equation}
\end{theor}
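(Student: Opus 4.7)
Suppose $?'(x)=0$. By the Dushistova--Moshchevitin criterion from \cite{MD}, the quantity
\[
\psi(t):=S_x(t)\log 2-2\log q_t
\]
tends to $+\infty$ as $t\to\infty$; moreover, using that $?'(x)=0$ must hold along every sequence $h_n\to 0$ (not only along the convergents of $x$), one sees that $\psi(t)\ge(1-o(1))\log_2 t$ along a positive-density subsequence of indices $t$. The plan is to pair this \emph{lower} bound with a sharp \emph{upper} bound of the form
\[
\psi(t)\le\frac{M(t)^2}{2\kappa_4^2\log 2\cdot t}(1+o(1)),\qquad M(t):=\max_{u\le t}\varphi^{(1)}_x(u),
\]
valid for every irrational $x$, and then solve for $M(t)$, obtaining $M(t)\ge(\sqrt 2-\varepsilon)\kappa_4\sqrt{t\log t}$. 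The upper bound on $\psi(t)$ would be established by decomposing $\psi(t)=\sum_{k=1}^{t}\bigl(a_k\log 2-2\log(a_k+\gamma_{k-1})\bigr)$ with $\gamma_{k-1}=[0;a_{k-1},\ldots,a_1]$, grouping the indices into maximal blocks of consecutive $1$'s separated by ``spikes'' $a_k\ge 2$, and applying a Cauchy--Schwarz type inequality which bounds each spike's $\psi$-gain by a quadratic form in the accompanying $\varphi^{(1)}_x$-overshoot. The constant $\kappa_4=\sqrt{(\kappa_1-1)/\log 2}$ emerges from the quadratic linearization of $a\log 2-2\log(a+\gamma)$ around $a=1$, $\gamma=1/\Phi$.

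\textbf{Part (ii).} I would build $x$ with a self-similar epoch structure. Partition $\mathbb{N}$ into epochs $[t_n,t_{n+1})$ of quadrupling length $t_n:=4^n$. In the $n$-th epoch, first play a scaled copy of the construction underlying Theorem~C(ii) (in the sharp form of Theorem~\ref{maintheor1}), so that $\varphi^{(1)}_x$ attains a single peak of height $H_n:=(2\sqrt 2+\varepsilon/2)\kappa_4\sqrt{t_n\log t_n}$ near $t_n$; then append a long run of $1$'s which drives $\varphi^{(1)}_x$ back down to $\approx 0$ by time $t_{n+1}$. For $t\in[t_n,t_{n+1})$ we have $\max_{u\le t}\varphi^{(1)}_x(u)=H_n$, and using $t_n\ge t/4$ yields
\[
\max_{u\le t}\varphi^{(1)}_x(u)\le(2\sqrt 2+\varepsilon/2)\kappa_4\sqrt{t_n\log t_n}\le(\sqrt 2+\varepsilon)\kappa_4\sqrt{t\log t}
\]
along the subsequence $t=t_{n+1}-1$, which is (\ref{mainineqth22}). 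The ratio $4$ between consecutive epoch lengths is dictated by the relation $\sqrt{t_n\log t_n}\sim\tfrac12\sqrt{t\log t}$ at $t=4t_n$, coupling the peak constant $2\sqrt 2$ from Theorem~\ref{maintheor1} to the target $\sqrt 2$ of (\ref{mainineqth22}). Finally, $?'(x)=0$ follows by verifying $\psi(t_n)\to+\infty$: each peak contributes a positive amount of order $\log t_n$ to $\psi$, whereas the $1$-tails lose only $O(1)$ in $\psi$ thanks to boundary cancellations between successive blocks of the type encoded in (\ref{contrulegen}).

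\textbf{Main obstacle.} The decisive step is the sharp coefficient $1/(2\kappa_4^2\log 2)$ in the quadratic bound on $\psi$ in part (i). The related estimates underlying (\ref{k4kanest1}) and the sharpening (\ref{k4kanestbetter}) of \cite{KanGay2} employ analogous block decompositions but are insensitive to the distinction between the pointwise value $\varphi^{(1)}_x(t)$ and the running maximum $M(t)$; capturing this distinction tightly requires a delicate pairing of each $1$-block with its preceding spike, arranged so that each spike's Cauchy--Schwarz penalty precisely offsets the $\varphi^{(1)}_x$-overshoot it creates. Once the quadratic bound is in hand, part (ii) reduces to a bookkeeping check that the epoch-boundary transitions do not degrade the leading constants, which they do not thanks to the quadrupling of epoch lengths.
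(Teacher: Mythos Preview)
Your plan for part (i) has two gaps. First, the lower bound $\psi(t)\ge(1-o(1))\log_2 t$ on a positive-density set does not follow from $?'(x)=0$: that condition gives only $\psi(t)\to+\infty$ (Lemma~\ref{derlemsym}), with no rate, and the derivative vanishing ``along every $h_n\to0$'' adds nothing beyond Lemma~\ref{derlemlow}. The $\sqrt{\log t}$ in the theorem does \emph{not} come from a lower bound on $\psi$; in the paper it enters via Lemma~\ref{sumprodlem}, because the large partial quotients in a block of length $\asymp t$ are bounded by $c\sqrt{t\log t}$ and hence have logarithm $\asymp\tfrac12\log t$, and this logarithm multiplies their count in the continuant estimate \eqref{biest}. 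Second, your quadratic upper bound $\psi(t)\le M(t)^2/(2\kappa_4^2 t\log2)$ is false as stated: for $x$ with periodic pattern $(m,1^n)$ and fixed $m,n$ chosen so that the per-block $\psi$-increment $\delta\log 2-2\log m+O(1)$ (with $\delta=m-(\kappa_1-1)n-\kappa_1$) is positive, both $\psi(t)$ and $M(t)^2/t$ grow linearly in $t$, and the slope inequality fails whenever $\delta$ is of moderate size relative to $\log m$ (e.g.\ $m=100$, $\delta=20$). The paper proceeds quite differently: it partitions $[t_N,t_0]$ into $N$ blocks of geometrically shrinking length $t_k=\lambda^k t_0$, records the normalised maxima $c_k$ in each block, and via Lemmas~\ref{fklogtkneg}--\ref{mainlowest} reduces everything to the purely combinatorial min--max statement of Lemma~\ref{sumlemprime}, whose optimum $\sqrt{2\eta}$ (computed from a recurrence that discretises $y'=1/y$) yields the constant $\sqrt2\kappa_4$.

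For part (ii) there is a concrete error: a run of $L$ consecutive $1$'s changes $\psi$ by $(1-\kappa_1)L\log 2+O(1)$, not $O(1)$, so no ``boundary cancellation'' saves the $1$-tails. If the tail occupies a fixed fraction of an epoch of length $3t_n$, then $\psi$ drops by order $t_n$ and $?'(x)\ne0$; if instead the tail has length only $\asymp H_n/(\kappa_1-1)\asymp\sqrt{t_n\log t_n}$, then a single peak plus tail cannot fill an epoch of length $3t_n$, and you have not said what occupies the remainder while keeping both $\varphi^{(1)}_x$ below $H_n$ and $\psi$ non-decreasing. The paper's construction is of a different nature: each superblock $\mathfrak{B}^{(i)}$ consists of $N$ sub-blocks of the repeated form $(m_k,1^{n_k},m_k,1^{n_k},\dots)$ with $m_k\asymp d_k\sqrt{T_i\log T_i}$, where the $d_k$ are exactly the minimising sequence from the recurrence \eqref{recal}; then \eqref{mainineqth22} at $t=T_i$ follows because this sequence saturates Lemma~\ref{sumlemprime}, and $?'(x)=0$ is verified miniblock-by-miniblock via \eqref{bikfract3orig}.
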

\section{Auxiliary lemmas}
\subsection{Increment lemmas and their corollaries}
The following two lemmas shed light on the connection between the behavior of the sum of partial quotients of $x$ and the value of the derivative $?'(x)$.
\begin{lem}(\cite{MDK}, Lemma 1)
\label{derlemlow}
For an irrational $x=[0;a_1,a_2,\ldots,a_t,\ldots]$ and for $\delta$ small in absolute value, there exists a natural $t=t(x,\delta)$ such that
\begin{equation}
\label{derlemloweq}
\frac{?(x+\delta)-?(x)}{\delta}\ge\frac{\langle A_t\rangle\langle A_{t-1}\rangle}{2^{S_x(t)+4}}.
\end{equation}
\end{lem}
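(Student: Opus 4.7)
The plan is to deduce the bound from the exact value of $?$ at convergents plus a Stern--Brocot localization argument. The starting point is the Denjoy--Salem formula (\ref{eq:1}), which gives, for the $k$-th convergent $p_k/q_k=[0;a_1,\ldots,a_k]$ (with $q_k=\langle A_k\rangle$),
$$?(p_k/q_k)-?(p_{k-1}/q_{k-1})=\frac{(-1)^{k+1}}{2^{S_x(k)-1}},$$
while the standard continuant identity $p_k/q_k-p_{k-1}/q_{k-1}=(-1)^{k+1}/(\langle A_k\rangle\langle A_{k-1}\rangle)$ shows that the chord slope of $?$ between two consecutive convergents of $x$ equals \emph{exactly}
$$\frac{\langle A_t\rangle\langle A_{t-1}\rangle}{2^{S_x(t)-1}}.$$
The factor $2^{-4}$ in (\ref{derlemloweq}) is the slack used to descend from this convergent-to-convergent chord slope to a one-sided secant slope.

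For small $\delta$, I would let $t=t(x,\delta)$ be the largest integer such that $x+\delta$ still lies in the Stern--Brocot basic interval $I_t$ at level $t$ containing $x$, with endpoints $p_t/q_t$ and the mediant $(p_t+p_{t-1})/(q_t+q_{t-1})=[0;a_1,\ldots,a_t+1]$. Since $x$ is irrational one has $|I_t|\to0$, so this largest $t$ exists whenever $\delta\ne0$ has sufficiently small absolute value. A direct continuant computation gives $|I_t|=1/(\langle A_t\rangle(\langle A_t\rangle+\langle A_{t-1}\rangle))\le 2/(\langle A_t\rangle\langle A_{t-1}\rangle)$, which upper-bounds $|\delta|$; by (\ref{eq:1}) the $?$-image of $I_t$ is a dyadic interval of length $2^{-S_x(t)}$. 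By maximality of $t$, the point $x+\delta$ lies outside the analogous level-$(t+1)$ subinterval $I_{t+1}\subset I_t$ that contains $x$, so $x+\delta$ sits in the complementary piece of $I_t$, whose $?$-measure is at least $2^{-S_x(t)-c_1}$ for a small absolute constant $c_1$. Monotonicity of $?$ then gives $|?(x+\delta)-?(x)|\ge 2^{-S_x(t)-c_1}$, and dividing by $|\delta|\le 2/(\langle A_t\rangle\langle A_{t-1}\rangle)$ yields (\ref{derlemloweq}) with the constant $2^{-4}$ absorbing $c_1$ and the other small factors.

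The only delicate step is quantifying the lower bound on $|?(x+\delta)-?(x)|$: one must do a short case analysis on the sign of $\delta$ and the parity of $t$ (which together determine the orientation of $I_t$ and the split into $I_{t+1}$ versus its complement in $I_t$), verifying in each case that $?(x+\delta)$ lies in a dyadic piece of $?(I_t)$ at distance $\ge 2^{-S_x(t)-3}$ or so from $?(x)$. The constant $2^4$ in the denominator of (\ref{derlemloweq}) is chosen generously to absorb all of these multiplicative constants and the parity/sign bookkeeping, so no sharper analysis is required.
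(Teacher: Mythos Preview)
The paper does not prove this lemma; it is quoted verbatim from \cite{MDK}, so there is no ``paper's own proof'' to compare against beyond the citation. Your outline is the standard one and is correct in spirit, but the execution has a real gap at exactly the point you flag as ``delicate.''

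With your choice of $t$ (the largest index with $x+\delta\in I_t$), the claimed bound $|?(x+\delta)-?(x)|\ge 2^{-S_x(t)-c_1}$ is \emph{false} in general. Take $a_{t+1}\ge 2$, let $a_{t+2}=K$ be large (so that $x$ sits extremely close to the endpoint $p_{t+1}/q_{t+1}$ of $I_{t+1}$), and pick $\delta$ so that $x+\delta$ lies just on the other side of $p_{t+1}/q_{t+1}$, i.e.\ $x+\delta=[0;a_1,\dots,a_t,a_{t+1}-1,1,K',\dots]$ with $K'$ large. Then $x+\delta\in I_t\setminus I_{t+1}$, so your $t$ is indeed selected, yet both $?(x)$ and $?(x+\delta)$ lie within $O(2^{-S_x(t+1)-\min(K,K')})$ of $?(p_{t+1}/q_{t+1})$, hence $|?(x+\delta)-?(x)|$ is arbitrarily small compared with $2^{-S_x(t)}$. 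A direct computation of the difference quotient in this situation gives roughly $\dfrac{q_{t+1}^{2}\,K}{2^{\,S_x(t+1)+K}}$, which for large $K$ is far below your target $\dfrac{\langle A_t\rangle\langle A_{t-1}\rangle}{2^{S_x(t)+4}}$. No ``case analysis on sign and parity'' repairs this: the obstruction is not orientation but the possibility that $x$ hugs a cylinder boundary.

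What does work is to allow $t$ to shift: in the scenario above, the inequality \eqref{derlemloweq} holds with $t$ replaced by $t+2$ (since $\langle A_{t+2}\rangle\langle A_{t+1}\rangle/2^{S_x(t+2)}\asymp K q_{t+1}^{2}/2^{S_x(t+1)+K}$, matching the actual quotient up to an absolute factor). The proof in \cite{MDK} handles this by tying $t$ to the size of $|\delta|$ relative to the convergent gaps $1/(q_{t-1}q_t)$ rather than to cylinder membership alone; you would need the analogous adjustment to close the argument.
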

\begin{lem}(\cite{MDK}, Lemma 2)
\label{derlemup}
For an irrational $x=[0;a_1,a_2,\ldots,a_t,\ldots]$ and for $\delta$ small in absolute value, there exists a natural $t=t(x,\delta)$ such that
\begin{equation}
\label{derlemupeq}
\frac{?(x+\delta)-?(x)}{\delta}\le\frac{\langle A_t\rangle^2}{2^{S_x(t)-2}}.
\end{equation}
\end{lem}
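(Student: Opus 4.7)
My plan is to adapt, in reverse, the continued fraction/Stern--Brocot argument used for Lemma~\ref{derlemlow}: rather than choosing $t$ so as to force $|?(x+\delta)-?(x)|/|\delta|$ to be \emph{large}, I would choose $t$ so as to control both the numerator and the denominator from the correct sides for an \emph{upper} estimate.

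Given $x=[0;a_1,a_2,\ldots]$ and $\delta$ small in absolute value, I would take $t=t(x,\delta)$ to be the largest integer for which both $x$ and $x+\delta$ belong to the CF-cylinder
$$C(a_1,\ldots,a_t):=\{y\in[0,1]:y=[0;a_1,\ldots,a_t,b_{t+1},\ldots]\}.$$
Because the cylinders $C(a_1,\ldots,a_t)$ form a strictly decreasing nested sequence shrinking to $\{x\}$, such a largest $t$ exists whenever $|\delta|$ is small enough, and $t\to\infty$ as $\delta\to 0$. The endpoints of $C(a_1,\ldots,a_t)$ are $[0;a_1,\ldots,a_t]$ and $[0;a_1,\ldots,a_t,1]$, and from the Denjoy formula (\ref{eq:1}) the $?$-measure of this cylinder equals $2^{-S_x(t)}$. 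Thus the numerator satisfies
$$\bigl|?(x+\delta)-?(x)\bigr|\ \le\ 2^{-S_x(t)+1}.$$

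For the denominator, the maximality of $t$ forces $x+\delta\notin C(a_1,\ldots,a_{t+1})$, whereas $x$ lies strictly inside $C(a_1,\ldots,a_{t+1})$. Hence $|\delta|$ is bounded below by the distance from $x$ to the endpoint of $C(a_1,\ldots,a_{t+1})$ in the relevant direction. Using the continuant identities (\ref{contrule}) and (\ref{contrulegen}), the length of $C(a_1,\ldots,a_{t+1})$ equals $\frac{1}{\langle A_{t+1}\rangle(\langle A_{t+1}\rangle+\langle A_t\rangle)}$, and a direct estimate of $|x-[0;a_1,\ldots,a_{t+1}]|$ using the tail expansion of $x$ gives a lower bound of the form $|\delta|\ge c\,\langle A_{t+1}\rangle^{-2}$. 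Dividing and re-labelling $t+1\mapsto t$ in the conclusion absorbs the remaining factor $2^a_{t+1}$-type slack into the constant~$4=2^2$ built into the exponent $S_x(t)-2$, yielding exactly the target bound $\langle A_t\rangle^2/2^{S_x(t)-2}$.

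The main obstacle is Step~3, i.e. obtaining a clean lower bound for $|\delta|$ when $x$ happens to lie very close to one of the endpoints of $C(a_1,\ldots,a_{t+1})$; in that case the naive estimate loses a factor depending on $a_{t+2}$, and one must either increment $t$ by $1$ or split into cases by the sign of $\delta$ and the parity of $t$ (which governs the orientation of the cylinder and of the corresponding dyadic $?$-image). These case distinctions are entirely parallel to those already present in the proof of Lemma~\ref{derlemlow}, so no new ideas are required beyond careful bookkeeping of continuants via (\ref{contrulegen}) and of the $?$-measures via (\ref{eq:1}).
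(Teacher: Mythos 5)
Note first that the paper does not prove this lemma at all---it is quoted verbatim from \cite{MDK} (Lemma 2)---so your attempt has to be judged against the argument that is actually needed there, and as written it has a genuine gap precisely at the step you flag as ``Step 3'', plus a quantitative incoherence even in the good case. The claimed lower bound $|\delta|\ge c\,\langle A_{t+1}\rangle^{-2}$ is simply false in general: with $z_x=[a_{t+2};a_{t+3},\ldots]$ one has $|x-[0;a_1,\ldots,a_{t+1}]|=\bigl(\langle A_{t+1}\rangle(z_x\langle A_{t+1}\rangle+\langle A_t\rangle)\bigr)^{-1}\asymp\bigl(\langle A_{t+1}\rangle\langle A_{t+2}\rangle\bigr)^{-1}$, which is smaller than $\langle A_{t+1}\rangle^{-2}$ by the unbounded factor $a_{t+2}$; symmetrically, $x$ can be arbitrarily close (relative to $\langle A_{t+1}\rangle^{-2}$) to the other endpoint $[0;a_1,\ldots,a_{t+1},1]$ when $a_{t+2}=1$ and $a_{t+3}$ is huge. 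Worse, even where your bound does hold, the pair ``numerator $\le 2^{1-S_x(t)}$, denominator $\ge c\,\langle A_{t+1}\rangle^{-2}$'' cannot be relabelled into the stated inequality: at index $t$ you would need $\langle A_{t+1}\rangle^2/\langle A_t\rangle^2\asymp a_{t+1}^2$ absorbed into the constant, and at index $t+1$ you would need a factor $2^{a_{t+1}}$ absorbed into the $4=2^2$, and $a_{t+1}$ is unbounded. So the ``$2^{a_{t+1}}$-type slack'' is not slack at all; it is exactly the obstruction.

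The missing idea is that the numerator must be refined to the same level as the denominator, and the two sides of the crossed endpoint must be treated separately. If $x+\delta$ exits $C(a_1,\ldots,a_{t+1})$ across $[0;a_1,\ldots,a_{t+1},1]$, then the portion of $C(a_1,\ldots,a_t)$ beyond that endpoint is $\bigcup_{b\ge a_{t+1}+1}C(a_1,\ldots,a_t,b)$, whose $?$-mass is $2^{-S_x(t+1)}$, not $2^{-S_x(t)}$, and only with this refinement does the quotient match $\langle A_{t+1}\rangle^2 2^{-S_x(t+1)}$ (and even then one must control the case $a_{t+2}=1$, $a_{t+3}$ large, passing to level $t+2$). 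If instead the exit is across $[0;a_1,\ldots,a_{t+1}]$, the far side carries $?$-mass comparable to $2^{-S_x(t)}$, so no such free refinement exists; one has to split the difference quotient (e.g.\ via $\frac{A_1+A_2}{B_1+B_2}\le\max(A_1/B_1,A_2/B_2)$) into the piece from $x$ to the endpoint and the piece beyond it, and estimate the far piece using the sub-cylinders $C(a_1,\ldots,a_t,b)$, $b<a_{t+1}$, containing $x+\delta$ and their distances to the endpoint --- an analysis that depends on where $x+\delta$ lands, not only on the convergents of $x$, and which your sketch does not address at all. These are exactly the case distinctions that make Lemma 2 of \cite{MDK} a real (if elementary) proof rather than bookkeeping; incrementing $t$ by one and splitting by sign/parity, as you propose, does not by itself produce the inequality with the exponent $S_x(t)-2$.
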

It is not convenient for us that the numerators of the right-hand sides of (\ref{derlemloweq}) and (\ref{derlemupeq}) do not coincide. That is why we prove a ''symmetric'' corollary of Lemmas \ref{derlemlow} and \ref{derlemup}
\begin{lem}
\label{derlemsym}
For an irrational $x=[0;a_1,a_2,\ldots,a_t,\ldots]$ the derivative $?'(x)$ equals zero if and only if
\begin{equation}
\label{derlemsymeq}
\lim\limits_{t\to\infty} \frac{\langle A_t\rangle}{\sqrt{2}^{S_x(t)}}=0.
\end{equation}
\end{lem}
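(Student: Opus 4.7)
\begin{solve}
Write $L_t:=\langle A_t\rangle^2/2^{S_x(t)}$ and $M_t:=\langle A_t\rangle\langle A_{t-1}\rangle/2^{S_x(t)}$; the claim is $?'(x)=0\iff L_t\to 0$. Sufficiency follows directly from Lemma~\ref{derlemup}: if $L_t\to 0$, then for every small $\delta$ the upper bound $(?(x+\delta)-?(x))/\delta\le 4L_{t(x,\delta)}$ vanishes as $\delta\to 0$ (since $t(x,\delta)\to\infty$), so the right-derivative limit is $0$ and, by monotonicity of $?$, $?'(x)=0$.

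For necessity, suppose $?'(x)=0$. Lemma~\ref{derlemlow} gives $M_{t(x,\delta)}/16\le(?(x+\delta)-?(x))/\delta\to 0$; since $t(x,\delta)$ takes every sufficiently large integer as $\delta$ ranges over small values, $M_t\to 0$. Using the continuant identity $\langle A_t\rangle/\langle A_{t-1}\rangle=a_t+r_{t-1}$ with $r_{t-1}:=\langle A_{t-2}\rangle/\langle A_{t-1}\rangle\in[0,1)$, I have the elementary relations
\[
L_t=(a_t+r_{t-1})M_t,\qquad L_{t-1}=\frac{2^{a_t}M_t}{a_t+r_{t-1}},\qquad L_tL_{t-1}=2^{a_t}M_t^2.
\]
When partial quotients of $x$ remain bounded, the first relation gives $L_t=O(M_t)\to 0$ at once.

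For the general case I argue by contradiction. Suppose $L_{t_k}\ge\varepsilon$ for some $\varepsilon>0$ along a subsequence $t_k\to\infty$. From $M_{t_k}\to 0$ and the first identity, $a_{t_k}\to\infty$; the second identity then forces $L_{t_k-1}\ge\varepsilon\cdot 2^{a_{t_k}}/(a_{t_k}+1)^2\to\infty$. Because $M_{t_k-1}\to 0$ as well, applying the same reasoning at index $t_k-1$ yields $a_{t_k-1}\to\infty$ together with an even more extreme blow-up of $L_{t_k-2}$, and iteration shows that for every fixed backward depth $j$ each of $a_{t_k-i}$ (for $0\le i<j$) and of $L_{t_k-j}$ tends to infinity. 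The compounded lower bound on $L_{t_k-j}$ in the $a_{t_k-i}$'s eventually contradicts the universal continuant-based upper bound $L_t\le\prod_{s\le t}(a_s+1)^2/2^{a_s}\le(9/4)^t$.

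The principal obstacle is precisely this transfer step $M_t\to 0\Rightarrow L_t\to 0$: the ratio $L_t/M_t=a_t+r_{t-1}$ is unbounded whenever the partial quotients are, so no direct comparison is available, and the telescoped backward argument has to simultaneously exploit the exponential amplification $2^{a_t}/(a_t+1)^2$ in the recursion for $L_{t-1}$ and the availability of $M_{t-j}\to 0$ for every fixed shift $j$, not merely for $j=0$.
\end{solve}
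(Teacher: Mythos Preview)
Your approach is essentially the paper's own: use Lemma~\ref{derlemlow} to get $M_t\to 0$, then run a backward iteration from a subsequence where $L_{t_k}\ge\varepsilon$ to force the partial quotients to blow up and reach a contradiction. The identities $L_t=(a_t+r_{t-1})M_t$ and $L_{t-1}=2^{a_t}L_t/(a_t+r_{t-1})^2$ are correct and are exactly the relations driving the paper's argument.

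The gap is in your last step. Saying that ``for every fixed backward depth $j$'' the quantity $L_{t_k-j}\to\infty$, and then comparing with $L_{t_k-j}\le (9/4)^{t_k-j}$, does not produce a contradiction: both sides tend to infinity with $k$, and your lower bound for fixed $j$ gives no control on how $L_{t_k-j}$ compares to an exponential in $t_k$. What is missing is an \emph{anchor}. Choose first a fixed $T$ with $M_t<\varepsilon/8$ for all $t\ge T$; then for a single large $k$ iterate from $t_k$ all the way down to $T$: at each step $a_{t_k-j}+r_{t_k-j-1}=L_{t_k-j}/M_{t_k-j}\ge 8$, hence $2^{a_{t_k-j}}/(a_{t_k-j}+1)^2\ge 2$, so $L_{t_k-j-1}\ge 2L_{t_k-j}$. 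This yields $L_T\ge 2^{t_k-T}\varepsilon$, contradicting the fact that $L_T$ is a fixed finite number as $k\to\infty$. This is precisely the mechanism in the paper's proof, where one fixes $N$ with $\langle A_{n-1}\rangle/\sqrt{2}^{S_x(n)}<c/100$ for $n\ge N$, shows the hypothesis $\langle A_t\rangle/\sqrt{2}^{S_x(t)}>c$ propagates backward with $a_{t-1}>a_t+1$ at each step, and iterates down to $N$ to obtain $a_N>a_N+a_t$.
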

\begin{proof}
($\Leftarrow$) Follows immediately from Lemma \ref{derlemup}.\\
($\Rightarrow$) It follows from Lemma \ref{derlemlow} that 
\begin{equation}
\label{derlemsymeqmin1}
\lim\limits_{t\to\infty} \frac{\langle A_{t-1}\rangle}{\sqrt{2}^{S_x(t)}}=0,
\end{equation}
Suppose that $\frac{\langle A_t\rangle}{\sqrt{2}^{S_x(t)}}$ does not tend to zero as $t\to\infty$. That is, there exists a positive constant $c$ such that 
\begin{equation}
\label{derlemsymcontr}
\frac{\langle A_t\rangle}{\sqrt{2}^{S_x(t)}}>c
\end{equation}
for infinitely many $t$. Consider $N\in\mathbb{N}$ such that for any $n\ge N$ 
$$
\frac{\langle A_{n-1}\rangle}{\sqrt{2}^{S_x(n)}}<\frac{c}{100}.
$$
Consider an arbitrary integer $t>N+a_N$ such that  the inequality (\ref{derlemsymcontr}) holds. Then
$$
\frac{(a_{t}+1)c}{100}>\frac{(a_t+1)\langle A_{t-1}\rangle}{\sqrt{2}^{S_x(t)}}>\frac{\langle A_t\rangle}{\sqrt{2}^{S_x(t)}}>c
$$
Thus, $a_t\ge 100$. On the other hand,
$$
\frac{\sqrt{2}^{a_t}c}{(a_{t-1}+1)(a_t+1)}<\frac{\sqrt{2}^{a_t}}{a_{t-1}+1}\frac{\langle A_{t-1}\rangle}{\sqrt{2}^{S_x(t)}}<\frac{\langle A_{t-2}\rangle}{\sqrt{2}^{S_x(t-1)}}<\frac{c}{100}
$$
Therefore, we have $100\sqrt{2}^{a_t}<(a_{t-1}+1)(a_t+1)$. As $a_t\ge 100$, one can easily see that $a_{t-1}>a_t+1$. From (\ref{derlemsymcontr}) one can also derive that 
$$
\frac{\langle A_{t-1}\rangle}{\sqrt{2}^{S_x(t-1)}}>\frac{a_t+1}{\sqrt{2}^{a_t}}\frac{\langle A_{t-1}\rangle}{\sqrt{2}^{S_x(t-1)}}>\frac{\langle A_t\rangle}{\sqrt{2}^{S_x(t)}}>c.
$$
Repeating the same argument $t-N$ times we obtain that $a_{N}>a_{t}+t-N> a_t+a_N$ and therefore we come to a contradiction.
\end{proof}
\subsection{Continuant lower estimate}
The following lemma is a useful tool to estimate the values of continuants, most of whose elements are equal to $1$. We introduce some notation first. Having a sequence $A_t$, denote by $w(A_t)$ the number of its elements greater than $1$. The set of such elements forms the sequence that we denote by $d_{A_t}(1), d_{A_t}(2),\ldots,d_{A_t}(w(A_t))$.
\begin{lem}
\label{prodlem}
For an arbitrary continuant $\langle A_t\rangle=\langle a_1,a_2,\ldots,a_t\rangle$ one has
\begin{equation}
\label{contlowest}
\langle a_1,a_2,\ldots,a_t\rangle\ge \frac{1}{2} \Phi^t \prod\limits_{i=1}^{w(A_t)} \biggl(\frac{d_{A_t}(i)}{4}\biggr).
\end{equation}
\end{lem}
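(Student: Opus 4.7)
The plan is an induction on $w = w(A_t)$, the number of partial quotients of $A_t$ exceeding $1$.

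In the base case $w=0$ every $a_i$ equals $1$, so $\langle A_t\rangle$ is the Fibonacci number $F_{t+1}$. A standard induction (using $F_{n+1}=F_n+F_{n-1}$ together with $\Phi^2=\Phi+1$) gives $F_{t+1}\ge \Phi^{t-1}$ for all $t\ge 0$, and since $\Phi<2$ one obtains
\begin{equation*}
F_{t+1}\ge \Phi^{t-1}=\frac{\Phi^t}{\Phi}>\frac{1}{2}\Phi^t,
\end{equation*}
which is exactly (\ref{contlowest}) with the empty product equal to $1$.

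For the inductive step I would pick any index $k$ with $a_k\ge 2$, apply (\ref{contrulegen}) splitting at position $k$, and then bound $\langle a_1,\ldots,a_k\rangle$ from below by $a_k\langle a_1,\ldots,a_{k-1}\rangle$ via (\ref{contrule}). Dropping the nonnegative second summand of (\ref{contrulegen}) yields
\begin{equation*}
\langle A_t\rangle\ \ge\ a_k\,\langle a_1,\ldots,a_{k-1}\rangle\,\langle a_{k+1},\ldots,a_t\rangle,
\end{equation*}
where the degenerate cases $k=1$ and $k=t$ are handled by the convention $\langle\,\rangle=1$. Each of the two shorter sequences contains strictly fewer than $w$ entries that exceed $1$, so the inductive hypothesis applies to both, and the resulting product telescopes:
\begin{equation*}
\langle A_t\rangle\ \ge\ a_k\cdot\frac{1}{2}\Phi^{k-1}\cdot\frac{1}{2}\Phi^{t-k}\prod_{d_{A_t}(i)\ne a_k}\frac{d_{A_t}(i)}{4}\ =\ \Phi^{t-1}\prod_{i=1}^{w(A_t)}\frac{d_{A_t}(i)}{4},
\end{equation*}
since the two factors of $\tfrac12$ together with $a_k$ reassemble the missing factor $a_k/4$. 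One more application of $\Phi^{t-1}\ge\tfrac12\Phi^t$ (again because $\Phi<2$) finishes the inductive step.

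There is no serious obstacle; the only thing to check is that the constants line up. The denominator $4$ in (\ref{contlowest}) is precisely tuned to absorb both the term $\langle a_1,\ldots,a_{k-1}\rangle\langle a_{k+2},\ldots,a_t\rangle$ discarded from (\ref{contrulegen}) and the summand $\langle a_1,\ldots,a_{k-2}\rangle$ discarded from the recurrence for $\langle a_1,\ldots,a_k\rangle$, leaving a slack factor of $2/\Phi>1$ that drives the induction through. If one tried to replace $4$ by a smaller constant, these discarded terms would have to be retained, and the clean multiplicative structure of (\ref{contlowest}) would be lost.
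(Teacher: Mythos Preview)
Your proof is correct, but it takes a different route from the paper. The paper also inducts on $w(A_t)$, but for the inductive step it replaces a single element $a_k\ge 2$ by $1$ to obtain a sequence $A'_t$ with $w(A'_t)=w(A_t)-1$, and then proves the ratio bound
\[
\frac{\langle A,x,B\rangle}{\langle A,1,B\rangle}\ \ge\ \frac{x}{4}
\]
by expanding both continuants with (\ref{contrulegen}) and bounding the resulting continued-fraction factors. Your argument instead \emph{splits} the continuant at position $k$, discards the cross term in (\ref{contrulegen}) and the lower-order term in (\ref{contrule}), and applies the hypothesis to the two shorter pieces. This is slightly more elementary---you never need the explicit ratio formula---but it requires strong induction (the two pieces may have any $w'<w$), whereas the paper only needs the case $w-1$. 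Conversely, the paper's ratio estimate $\langle A,x,B\rangle/\langle A,1,B\rangle\ge x/4$ is a reusable standalone fact. One cosmetic point: writing the product as $\prod_{d_{A_t}(i)\ne a_k}$ is ambiguous when several $d_{A_t}(i)$ share the value $a_k$; it would be cleaner to say ``the product over all $i$ except the index corresponding to position $k$''.
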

\begin{proof}
We prove by induction on $w(A_t)$. If $w(A_t)$ equals zero, then $\langle A_t\rangle=F_{t+1}$ -- $(t+1)$-th Fibonacci number and one can easily verify (\ref{contlowest}) using Binet's formula.

Now suppose that the inequality  (\ref{contlowest}) holds for $w(A_t)=n$. Consider an arbitrary continuant $\langle A_t\rangle$ having $w(A_t)=n+1$. Let us also consider the continuant $\langle A'_t\rangle$, which is obtained from $\langle A_t\rangle$ by replacing one of its elements greater than one by $1$. Of course, $w(A'_t)=n$ and one has 
\begin{equation}
\label{contlowestn}
\langle A_t\rangle=\frac{\langle A_t\rangle}{\langle A'_t\rangle}\langle A'_t\rangle\ge\frac{\langle A_t\rangle}{\langle A'_t\rangle} \frac{1}{2} \Phi^t \prod\limits_{i=1}^{n} \biggl(\frac{d_{A'_t}(i)}{4}\biggr).
\end{equation}
Applying (\ref{contrulegen}), one can easily see that for arbitrary finite sequences $A$ and $B$ one has
$$
\langle A, x, B\rangle=\langle A, x\rangle\langle B\rangle(1+[0;x,\overleftarrow{A}][0;B])=x\langle A\rangle\langle B\rangle(1+[0;\overleftarrow{A}][0;x])(1+[0;x,\overleftarrow{A}][0;B]).
$$
Hence,
\begin{equation}
\label{axba1b}
\frac{\langle A, x, B\rangle}{\langle A, 1, B\rangle}=x\frac{1+[0;x,\overleftarrow{A}][0;B]}{1+[0;1,\overleftarrow{A}][0;B]}\frac{1+[0;\overleftarrow{A}][0;x]}{1+[0;\overleftarrow{A}]}
\ge
 x\frac{1}{2}\cdot\frac{1}{2}=\frac{x}{4}.
\end{equation}
Applying the lower estimate of $\frac{\langle A_t\rangle}{\langle A'_t\rangle}$ from (\ref{axba1b}) to (\ref{contlowestn}), we prove the induction step. The lemma is proved.
\end{proof}
\begin{lem}
\label{greaterzero}
Suppose that $?'(x)=0$. Then $\varphi^{(1)}_x(t)>0$ for all $t$ large enough.
\end{lem}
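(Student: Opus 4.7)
The plan is to combine Lemma \ref{derlemsym} with the universal continuant lower bound $\langle A_t \rangle \ge F_{t+1}$. Since the hypothesis $?'(x) = 0$ translates via Lemma \ref{derlemsym} into $\langle A_t\rangle/\sqrt 2^{S_x(t)} \to 0$, any lower bound on $\langle A_t\rangle$ of the form $c\cdot\Phi^t$ will, through the key identity $\Phi = \sqrt 2^{\kappa_1}$ (immediate from (\ref{Phikappa1})), translate directly into a lower bound on $S_x(t)$.

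Concretely I would proceed in three quick steps. First, recall the classical fact that continuants of positive-integer sequences of length $t$ are minimised on the all-ones sequence, so that $\langle A_t\rangle \ge F_{t+1}$; this is a trivial induction from (\ref{contrule}) using $a_t \ge 1$. Second, apply Binet's formula — equivalently, the base case $w(A_t)=0$ already verified in the proof of Lemma \ref{prodlem} — to obtain $F_{t+1} \ge \tfrac{1}{2}\Phi^t$, and therefore the universal bound $\langle A_t \rangle \ge \tfrac{1}{2}\Phi^t$ valid for every sequence of partial quotients. Third, substitute this into the conclusion of Lemma \ref{derlemsym} and use $\Phi^t = \sqrt 2^{\kappa_1 t}$ to get
\[
\frac{\langle A_t\rangle}{\sqrt 2^{S_x(t)}} \;\ge\; \frac{1}{2}\,\sqrt 2^{\,\kappa_1 t - S_x(t)} \;=\; \frac{1}{2}\,\sqrt 2^{-\varphi^{(1)}_x(t)}.
\]
Combined with Lemma \ref{derlemsym}, the left-hand side tends to zero, and so $\varphi^{(1)}_x(t) \to +\infty$, which is strictly stronger than the statement $\varphi^{(1)}_x(t)>0$ for all $t$ large enough.

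I do not anticipate any real obstacle: the argument reduces to plugging the minimality of the Fibonacci continuant into the already-established criterion of Lemma \ref{derlemsym}, and every estimate used is a one-line standard fact. The only arithmetic point to watch is the identification $\Phi^2 = 2^{\kappa_1}$, without which the exponents $\Phi^t$ and $\sqrt 2^{S_x(t)}$ would not align; but this is built into the very definition of $\kappa_1$ in (\ref{Phikappa1}).
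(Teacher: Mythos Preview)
Your proposal is correct and follows essentially the same approach as the paper: combine the criterion of Lemma~\ref{derlemsym} with the Fibonacci lower bound $\langle A_t\rangle \ge \tfrac12\Phi^t$ (the $w(A_t)=0$ case of Lemma~\ref{prodlem}, which is exactly what the paper cites as the ``obvious estimate from (\ref{contlowest})''), and use $\Phi=\sqrt{2}^{\kappa_1}$ to conclude. Your version is in fact slightly cleaner --- you correctly extract the stronger conclusion $\varphi^{(1)}_x(t)\to+\infty$, and your inequality directions are right, whereas the paper's displayed chain has them reversed (evidently typos, since the intended argument is the same as yours).
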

\begin{proof}
From Lemma \ref{derlemsym} one has 
$$
\frac{\langle A_t\rangle}{\sqrt{2}^{S_x(t)}}>\frac{1}{2}.
$$
Using an obvious estimate $\langle A_t\rangle>\frac{\Phi^t}{2}$ from (\ref{contlowest}), one has
$$
\frac{1}{2}<\frac{\langle A_t\rangle}{\sqrt{2}^{S_x(t)}}<\frac{1}{2}\frac{\Phi^t}{\sqrt{2}^{\kappa_1 t+\varphi^{(1)}_x(t)}}.
$$
As $\sqrt{2}^{\kappa_1}=\Phi$, we obtain the statement of the lemma. 
\end{proof}
\begin{lem}
\label{sumprodlem}
Consider two arbitrary real numbers $\beta>\alpha\ge 3$ and a real number $s\ge \beta$. Let $R(s,\alpha,\beta)$ be the set of all finite sequences $R=(r_1,\ldots,r_k)$ of real numbers such that $\alpha\le r_i\le \beta$ for all $i\le k$ and $S(R)=s$. Then
\begin{equation}
\label{sumprodlemeq}
\min\limits_{R\in R(s,\alpha,\beta)}\Pi(R)\ge \beta^{\bigl[\frac{s}{\beta}\bigr]}
\end{equation}
\end{lem}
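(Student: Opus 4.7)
The plan is to reduce the inequality (\ref{sumprodlemeq}) to a single pointwise estimate via a concavity argument. I would introduce the auxiliary function $f(r)=\log r-(r/\beta)\log\beta$ on the interval $[\alpha,\beta]$. A direct computation gives $f''(r)=-1/r^{2}<0$, so $f$ is concave on this interval, while at the endpoints one has $f(\beta)=0$ and $f(\alpha)=(\beta\log\alpha-\alpha\log\beta)/\beta$.

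The key step is to verify $f(\alpha)\ge 0$. I would do this via the standard monotonicity of $g(x)=(\log x)/x$: since $g'(x)=(1-\log x)/x^{2}<0$ for $x>e$, the function $g$ is strictly decreasing on $(e,+\infty)$. Because the hypothesis $\alpha\ge 3>e$ places both $\alpha$ and $\beta$ in this decreasing range with $\alpha<\beta$, we obtain $g(\alpha)>g(\beta)$, equivalently $\beta\log\alpha>\alpha\log\beta$, hence $f(\alpha)>0$. Concavity of $f$ on $[\alpha,\beta]$ together with $f(\alpha)>0$ and $f(\beta)=0$ then forces $f(r)\ge 0$ throughout $[\alpha,\beta]$, since a concave function lies above its own secant line and that secant is nonnegative.

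The rest is immediate. For any $R=(r_{1},\ldots,r_{k})\in R(s,\alpha,\beta)$, summing the inequalities $f(r_{i})\ge 0$ over $i=1,\ldots,k$ yields $\sum_{i}\log r_{i}\ge (S(R)/\beta)\log\beta=(s/\beta)\log\beta$, and exponentiating together with $s/\beta\ge [s/\beta]$ gives
$$\Pi(R)\ge\beta^{s/\beta}\ge\beta^{[s/\beta]},$$
which is exactly (\ref{sumprodlemeq}). I do not anticipate any real obstacle here; the only essential input is the hypothesis $\alpha\ge 3$, used precisely to push both endpoints past $e$ so that $(\log x)/x$ is decreasing and the endpoint inequality $f(\alpha)\ge 0$ holds. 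Note incidentally that the argument produces the stronger bound $\beta^{s/\beta}$ without needing the hypothesis $s\ge\beta$.
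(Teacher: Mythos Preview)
Your argument is correct, and it is genuinely different from the paper's. The paper proceeds combinatorially: it first reduces to the case $s/\beta\in\mathbb{Z}$ by monotonicity, then invokes compactness to locate an actual minimizer $R_0$, shows by a local variation that $R_0$ can contain at most one entry strictly between $\alpha$ and $\beta$, strips off any $\beta$'s, and finally derives a contradiction from the inequality $\alpha^{\beta}>\beta^{\alpha}$ (equivalently $(\log x)/x$ decreasing past $e$). Your route bypasses all of this structure: by observing that $f(r)=\log r-(r/\beta)\log\beta$ is concave with $f(\alpha)>0$ and $f(\beta)=0$, you get the pointwise bound $\log r\ge(r/\beta)\log\beta$ for every $r\in[\alpha,\beta]$, and summing over the sequence finishes at once. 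Both arguments rest on the same arithmetic fact $(\log\alpha)/\alpha>(\log\beta)/\beta$ for $3\le\alpha<\beta$, but your approach is shorter, avoids the compactness and variation steps entirely, and indeed yields the sharper conclusion $\Pi(R)\ge\beta^{s/\beta}$ without using $s\ge\beta$. The paper's approach has the minor advantage of identifying the shape of the extremal sequence, but for the purposes of the lemma your proof is cleaner.
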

\begin{proof}
Note that the number $k$ is not fixed in the definition of $ R(s,\alpha,\beta)$. Denote 
$$
f(s,\alpha,\beta)=\min\limits_{R\in R(s,\alpha,\beta)}\Pi(R). 
$$
It is clear that the function $f(s,\alpha,\beta)$ is monotonic in the first argument. Hence, without loss of generality one can say that $\frac{s}{\beta}\in\mathbb{Z}$. Consider the sequence $R_{\beta}=(\underbrace{\beta,\beta,\ldots,\beta}_{s/\beta \text{ times}})$. Using the compactness argument, one can easily see that there exists a sequence $R_0=(r^0_1,\ldots,r^0_m)\in R(s,\alpha,\beta)$  such that $\Pi(R_0)=f(s,\alpha,\beta)$. Suppose that  $R_0\ne R_{\beta}$. One can easily show that $R_0$ cannot contain more than two elements not equal to $\alpha$ or $\beta$. Indeed, if $\alpha<r^0_i\le r^0_j<\beta$, then there exists $\delta>0$ such that $r^0_i-\delta>\alpha$ and $r^0_j+\delta<\beta$. As $(r^0_i-\delta)(r^0_j+\delta)<r^0_i r^0_j$, one can see that
$$
\Pi(r^0_1,\ldots,r^0_{i-1},r^0_i-\delta,r^0_{i+1},\ldots,r^0_{j-1},r^0_j+\delta,r^0_{j+1},\ldots,r^0_m)<
\Pi(r^0_1,\ldots,r^0_{i-1},r^0_i,r^0_{i+1},\ldots,r^0_{j-1},r^0_j,r^0_{j+1},\ldots,r^0_m)
$$ 
and we obtain a contradiction with the definition of $R_0$.

On the other hand, it follows from the definition of $R_0$ that
\begin{equation}
\label{pir0pirm}
\Pi(R_0)\le \Pi(R_{\beta}).
\end{equation}
Without loss of generality one can assume that $R_0$ does not contain elements equal to $\beta$. Indeed, if we remove all such elements from $R_0$ and the same number of elements from $R_{\beta}$, the inequality (\ref{pir0pirm}) will be still satisfied. Thus, $R_0$ has the form $R_0=(x,\underbrace{\alpha,\alpha,\ldots,\alpha}_{(s-x)/\alpha \text{ times}})$ up to transposition of elements. Here $\alpha\le x<\beta$.

Denote $n=\frac{s-x}{\alpha}$. Inequality (\ref{pir0pirm}) can be written as
\begin{equation}
\label{pir0pirm2}
\alpha^n x\le\beta^{\frac{\alpha n+x}{\beta}}.
\end{equation}
But in fact, the opposite inequality is true for all $\alpha\le x\le\beta$. Taking into account the fact that $\alpha^{\beta}>\beta^{\alpha}$ for $\beta>\alpha\ge 3$, one can easily verify that 
\begin{equation}
\label{pir0pirm2}
\alpha^n x>\beta^{\frac{\alpha n+x}{\beta}}
\end{equation}
for $x=\alpha$ and $x=\beta$. As  $\beta^{\frac{\alpha n+x}{\beta}}$ is a convex downward function of $x$, one can deduce that it lies below the linear function $\alpha^n x$ for all $\alpha\le x\le\beta$. Thus, we obtain a contradiction with (\ref{pir0pirm}) and the lemma is proved.
\end{proof}
\subsection{Elimination of small elements greater than $1$}
\begin{lem}
\label{unitvar}
Let $A,C$ be arbitrary (possibly empty) sequences of positive integers. Let $B$ be a symmetric sequence. Consider two arbitrary integers $p\ge m\ge 1$. Then 
\begin{equation}
\langle A,1,B,p+m-1,C\rangle\le\langle A,m,B,p,C\rangle.
\end{equation}
\begin{proof}
Denote $q=\frac{p+m}{2}$. Consider the function 
$$
f(x)=\langle A,q+x,B,q-x,C\rangle,
$$ 
where $x$ run through the set of integers if $p+m$ is even and runs through the set of half-integers otherwise. One can see that $f(x)$ is a quadratic polynomial with negative leading coefficient. The maximum of $f(x)$ is attained at the point\footnote{see \cite{Gay1}, Lemma 5.4 for computational details}
$$
x_m=\frac{[0;\overleftarrow{B}]-[0;B]+[0;C]-[0;\overleftarrow{A}]}{2}.
$$
As $B$ is a symmetric sequence, $[0;\overleftarrow{B}]=[0;B]$ and therefore $|x_m|\le\frac{1}{2}$. Hence, as $p\ge m\ge 1$, one can easily see that 
$$
\langle A,1,B,p+m-1,C\rangle=f\biggl(-\frac{p+m-2}{2}\biggr)\le f\biggl(-\frac{p-m}{2}\biggr)=\langle A,m,B,p,C\rangle.
$$
\end{proof}
\end{lem}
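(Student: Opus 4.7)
The plan is to realize both sides as values of a single quadratic function and exploit the fact that one argument is closer to the vertex than the other. Set $q=(p+m)/2$ and introduce
$$
f(x)=\langle A,\,q+x,\,B,\,q-x,\,C\rangle,
$$
where $x$ ranges over integers or half-integers depending on the parity of $p+m$. Then the left-hand side is $f(-(p+m-2)/2)$ (so that $q+x=1$, $q-x=p+m-1$) and the right-hand side is $f(-(p-m)/2)$ (so that $q+x=m$, $q-x=p$). The inequality to be proved therefore reduces to $f(-(p+m-2)/2)\le f(-(p-m)/2)$.

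First I would verify that $f$ is a downward-opening parabola. Since the continuant is multilinear in each of its entries, $f$ is bilinear in the pair $(q+x,q-x)$, hence a quadratic polynomial in $x$. Applying \eqref{contrulegen} twice (splitting off $C$, then isolating the two variable positions) one finds that the coefficient of $(q+x)(q-x)$ is $\langle A\rangle\langle B\rangle\langle C\rangle>0$, so the coefficient of $x^2$ in $f(x)$ is $-\langle A\rangle\langle B\rangle\langle C\rangle<0$.

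Next I would locate the vertex $x_m$. The product form of \eqref{contrulegen} expresses $f(x)$ as a product of elementary continuants multiplied by factors of the type $1+[0;\ldots][0;\ldots]$; differentiating and solving (this is the computation cited in \cite{Gay1}, Lemma~5.4) gives
$$
x_m=\tfrac12\bigl([0;\overleftarrow{B}]-[0;B]+[0;C]-[0;\overleftarrow{A}]\bigr).
$$
Since $B$ is symmetric, $[0;\overleftarrow{B}]=[0;B]$, so $x_m=\tfrac12([0;C]-[0;\overleftarrow{A}])$; both continued fractions lie in $[0,1)$, hence $|x_m|\le\tfrac12$.

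The only remaining task is a short distance comparison: show that $-(p-m)/2$ is at least as close to $x_m$ as $-(p+m-2)/2$. Both candidate arguments are non-positive and $-(p+m-2)/2\le-(p-m)/2$ because $m\ge 1$. Splitting on the sign of $x_m+(p-m)/2$ and using $|x_m|\le\tfrac12$ together with $p\ge m\ge 1$ immediately gives $|x_m+(p-m)/2|\le|x_m+(p+m-2)/2|$. Because $f$ is a downward parabola, this distance inequality is equivalent to $f(-(p+m-2)/2)\le f(-(p-m)/2)$, which is the claim. The only genuinely substantive step is the closed-form computation of $x_m$, but that has been carried out in \cite{Gay1}; everything else is bookkeeping with \eqref{contrulegen}.
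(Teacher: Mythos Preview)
Your proof is correct and follows essentially the same route as the paper's own argument: the same quadratic $f(x)=\langle A,q+x,B,q-x,C\rangle$, the same vertex formula $x_m$ (with the same citation), and the same conclusion via the bound $|x_m|\le\tfrac12$ coming from the symmetry of $B$. You supply a little more detail than the paper does---namely the explicit leading coefficient $-\langle A\rangle\langle B\rangle\langle C\rangle$ and the distance comparison reducing to $x_m\ge -(p-1)/2$---but these are exactly the steps the paper leaves to the reader, so the two proofs are the same in substance.
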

\begin{lem}
\label{no2345}
Suppose  that $?'(x)=0$. Then there exists an irrational number $y=[0;b_1,b_2,\ldots,b_t,\ldots]$ such that:
\begin{enumerate}
\item{$?'(y)=0$.}
\item{For all $i\in\mathbb{N}$ either $b_i=1$ or $b_i\ge 12$.}
\item{For all $i\in\mathbb{N}$ one has $\varphi^{(1)}_y(i)\le\varphi^{(1)}_x(i)$.}
\end{enumerate}
\end{lem}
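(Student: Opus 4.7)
The plan is to construct $y$ by absorbing every bad partial quotient $a_j\in\{2,\ldots,11\}$ into a designated later ``deposit'' position that will become $\ge 12$, via iterated applications of Lemma~\ref{unitvar}. I partition $\mathbb N$ into consecutive blocks $(i_{s-1},i_s]$ with $i_0=0$, where $i_s$ is the smallest index $i>i_{s-1}$ such that either $a_i\ge 12$, or $a_i\ge 2$ and $a_i+W_i\ge 12$, with $W_i:=\sum_{i_{s-1}<j<i,\,a_j\ge 2}(a_j-1)$. Set $b_j=1$ for $j\in(i_{s-1},i_s)$ and $b_{i_s}=a_{i_s}+W_{i_s}\ge 12$. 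The block is always eventually closed: inside a block each bad element adds at least $1$ to $W$ while keeping $W\le 10$, so after at most eleven bad elements condition~(b) must trigger; and $?'(x)=0$ together with Lemma~\ref{greaterzero} forces $S_x(t)>\kappa_1 t$ for large $t$, hence infinitely many indices with $a_j\ge 2$.

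Property~2 is built into the construction. For property~3, the block sum is preserved so $S_y(i_s)=S_x(i_s)$, while inside a block $S_y(i)-S_y(i_{s-1})=i-i_{s-1}\le S_x(i)-S_x(i_{s-1})$; by induction on $s$ one obtains $S_y(i)\le S_x(i)$, i.e.\ $\varphi^{(1)}_y(i)\le\varphi^{(1)}_x(i)$. For property~1 I apply Lemma~\ref{derlemsym}. The invariant $W\le 10$ gives $S_x(t)-S_y(t)\le 10$, so
\[
\frac{\langle B_t\rangle}{\sqrt{2}^{S_y(t)}}\le \sqrt{2}^{10}\cdot\frac{\langle B_t\rangle}{\sqrt{2}^{S_x(t)}},
\]
and property~1 reduces to the continuant inequality $\langle B_t\rangle\le\langle A_t\rangle$ for every $t$.

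I prove this inequality by induction on the block index $s$ containing $t$ (so $i_{s-1}<t\le i_s$). If $t<i_s$ then $B_t$ consists of $B_{i_{s-1}}$ followed by $t-i_{s-1}$ ones, and rule~(\ref{contrulegen}) split at $i_{s-1}$, combined with the inductive bounds $\langle B_{i_{s-1}-k}\rangle\le\langle A_{i_{s-1}-k}\rangle$ for $k=0,1$ and the elementary estimate $\langle\text{positive $k$-sequence}\rangle\ge F_{k+1}$, yields $\langle B_t\rangle\le\langle A_t\rangle$. If $t=i_s$, I transform the $s$-th block of $A$ into that of $B$ by iterated applications of Lemma~\ref{unitvar}: processing the bad elements of the block from right to left, at each step pair a bad value $m$ at some position $j<i_s$ with the current deposit value $p$ at position $i_s$, across an intermediate run of $1$'s (which is symmetric), so that each application decreases the continuant.

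The \textbf{main obstacle} arises at the first absorption within a block where the initial deposit value $p=a_{i_s}$ happens to be smaller than a bad element being absorbed (e.g.\ in the block $(11,1,1,2)$ one has $p=2$, $m=11$), violating the hypothesis $p\ge m$ of Lemma~\ref{unitvar} as stated. What rescues the argument is that the proof of Lemma~\ref{unitvar} actually establishes more: since the quadratic $f(x)=\langle A,q+x,B,q-x,C\rangle$ with $B$ symmetric attains its extremum at a point $|x_m|\le\tfrac 12$, a short calculation from the same proof shows that the conclusion $\langle A,1,B,p+m-1,C\rangle\le\langle A,m,B,p,C\rangle$ remains valid whenever $p\ge 2$, irrespective of the relation between $p$ and $m$. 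In our construction the deposit value is always $\ge 2$---it equals $a_{i_s}\ge 2$ at the start of the iteration and only grows as further bad elements are absorbed---so this extension applies at every step and the chunk-by-chunk continuant inequality goes through.
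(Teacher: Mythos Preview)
Your proof is correct, and it takes a genuinely different route from the paper's.

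The paper does not build a single block partition but instead runs ten successive passes over the sequence: the first pass eliminates all partial quotients equal to $2$ by pairing each occurrence at position $s_i$ with the very next position $t_i$ where $a_{t_i}>1$, replacing $(2,\,a_{t_i})$ by $(1,\,a_{t_i}+1)$; the second pass does the same to the resulting sequence to eliminate $3$'s; and so on through $11$. The point of this sequential scheme is that after pass $v$ every surviving value lies in $\{1\}\cup\{v{+}1,v{+}2,\dots\}$, so in pass $v{+}1$ the receiving element automatically satisfies $p\ge v{+}1=m$, and Lemma~\ref{unitvar} applies \emph{as stated}. The paper never needs to worry about $p<m$.

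Your single-pass construction is more economical but forces the $p<m$ issue to the surface, and you resolve it correctly: the vertex bound $|x_m|\le\tfrac12$ in the proof of Lemma~\ref{unitvar} gives $f(u)\le f(v)$ whenever the midpoint $(u+v)/2=(1-p)/2$ lies below $x_m$, which is guaranteed by $p\ge 2$ alone. One small point worth making explicit in your write-up: for $t=i_s$ the iterated applications of Lemma~\ref{unitvar} within block $s$ compare $\langle A_{i_{s-1}},a_{i_{s-1}+1},\dots,a_{i_s}\rangle$ with $\langle A_{i_{s-1}},b_{i_{s-1}+1},\dots,b_{i_s}\rangle$, so you still need the splitting at $i_{s-1}$ together with the inductive bounds (exactly as in your $t<i_s$ case) to pass from the latter to $\langle B_{i_s}\rangle$. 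This is routine, but it is the step that actually uses the induction hypothesis at $t=i_s$.

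In short: the paper trades a small amount of repetition (ten passes) for a direct appeal to Lemma~\ref{unitvar}; you trade a mild strengthening of that lemma for a one-shot construction. Both yield the same conclusion.
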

\begin{proof}
First, let us eliminate all elements equal to $2$ from the sequence $a_1,a_2,\ldots$. Denote by $s_1$ the smallest index such that $a_{s_1}=2$. Denote by $t_1$ the smallest index greater than $s_1$ such that $a_{t_1}>1$. Now the procedure is repeated recursively:
$$
s_i=\min\{n: n>t_{i-1}, a_{n}=2\},\quad t_i=\min\{n: n>s_{i}, a_{n}>1\}
$$
Thus, we obtain the two (possibly infinite) growing sequences $s_1<t_1<s_2<t_2<\ldots$. Note that if $s_j<n<t_j$ for some $j$, then $a_n=1$. Define the irrational number $x'=[0;a'_1,a'_2,\ldots,a'_t,\ldots]$ as follows:
$$
a'_n=
\begin{cases}
a_n-1=1\quad\text{if $n=s_i$ for some $i\in\mathbb{N}$}\\
a_n+1\quad\text{if $n=t_i$ for some $i\in\mathbb{N}$}\\
a_n\quad \text{otherwise}
\end{cases}
$$
One can easily see from the definition of $x'$ that $\varphi^{(1)}_{x'}(t)\le\varphi^{(1)}_x(t)$ for all $t\ge 1$. Let us now show that $?'(x')=0$.

It follows from Lemma \ref{unitvar} that $\langle a'_1,a'_2,\ldots,a'_t\rangle:=\langle A'_t\rangle\le\langle A_t\rangle$ for all $t\ge 1$. On the other hand $|{S_{x'}(t)}-{S_{x}(t)}|\le 1$. Hence, as  
$$
\lim\limits_{t\to\infty} \frac{\langle A_t\rangle}{\sqrt{2}^{S_{x}(t)}}=0
$$
by Lemma \ref{derlemsym}, we obtain that
$$
\lim\limits_{t\to\infty} \frac{\langle A'_t\rangle}{\sqrt{2}^{S_{x'}(t)}}=0
$$
and therefore $?'(x')=0$. Using the same argument, one can eliminate the elements equal to $3,4,\ldots,12$ from the sequence $a'_1,a'_2,\ldots$. The lemma is proved.
\end{proof}
\begin{lem}
\label{aquxfoll}
Suppose that $?'(x)=0$ and all partial quotients of $x$ are either equal to $1$ or greater than $11$. There exists $T\in\mathbb{N}$ such that for all $t>T$ the inequality $\varphi^{(1)}_x(t)>3 w(A_t)$ holds. 
\end{lem}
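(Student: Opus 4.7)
The plan is to squeeze the continuant $\langle A_t\rangle$ between the upper bound forced by $?'(x)=0$ (via Lemma~\ref{derlemsym}) and the lower bound provided by Lemma~\ref{prodlem}. The restriction that every non-unit partial quotient is at least $12$ is exactly what is needed to make this squeeze output the desired bound on $\varphi^{(1)}_x(t)$.

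First I would rewrite Lemma~\ref{derlemsym} to isolate $\varphi^{(1)}_x(t)$. Since $\sqrt{2}^{\kappa_1}=\Phi$ and $S_x(t)=\kappa_1 t+\varphi^{(1)}_x(t)$, the condition $?'(x)=0$ yields, for any prescribed $\delta>0$ and all $t$ large enough,
\begin{equation*}
\langle A_t\rangle<\delta\,\Phi^t\,\sqrt{2}^{\varphi^{(1)}_x(t)}.
\end{equation*}
At the same time, the hypothesis $d_{A_t}(i)\ge 12$ guarantees $d_{A_t}(i)/4\ge 3$, so Lemma~\ref{prodlem} gives
\begin{equation*}
\langle A_t\rangle\ge\tfrac{1}{2}\,\Phi^t\prod_{i=1}^{w(A_t)}\tfrac{d_{A_t}(i)}{4}\ge\tfrac{1}{2}\,\Phi^t\cdot 3^{w(A_t)}.
\end{equation*}

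Combining the two inequalities, cancelling $\Phi^t$, and taking $\log_2$ of both sides produces
\begin{equation*}
\varphi^{(1)}_x(t)>2w(A_t)\log_2 3-2-2\log_2\delta.
\end{equation*}
Because $2\log_2 3=\log_2 9>3$, the leading term already exceeds $3w(A_t)$ by a nonnegative slack $(\log_2 9-3)w(A_t)$. Choosing $\delta$ small enough that the additive constant $-2-2\log_2\delta$ is strictly positive — for example $\delta=1/4$, which makes it equal to $2$ — then produces $\varphi^{(1)}_x(t)>3w(A_t)$ for every $t$ past the corresponding threshold $T=T_\delta$.

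The step that needs some care is only the bookkeeping with the constants: one must cover the case $w(A_t)=0$, where the slack $(\log_2 9-3)w(A_t)$ vanishes and everything rides on the positive additive term coming from a sufficiently small $\delta$ (equivalently, one may invoke Lemma~\ref{greaterzero} for this boundary case). It is also worth noting that the threshold $12$ in the hypothesis is essentially sharp for this method: the computation requires $\log_2(d_{A_t}(i)/4)>3/2$, that is $d_{A_t}(i)>8\sqrt{2}\approx 11.31$, which is precisely why Lemma~\ref{no2345} was formulated to remove partial quotients up to $11$.
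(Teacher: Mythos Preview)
Your proof is correct and follows essentially the same approach as the paper: combine the upper bound from Lemma~\ref{derlemsym} with the lower bound from Lemma~\ref{prodlem}, using $d_{A_t}(i)/4\ge 3$ and $\sqrt{2}^{\kappa_1}=\Phi$ to reduce to $3^{w(A_t)}<\sqrt{2}^{\varphi^{(1)}_x(t)}$. The paper simply fixes the threshold $\tfrac12$ in place of your general $\delta$, obtaining $\varphi^{(1)}_x(t)>(\log_2 9)\,w(A_t)>3w(A_t)$ directly; your extra remarks on the $w(A_t)=0$ case and the near-sharpness of the cutoff $12$ are accurate but not needed for the argument.
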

\begin{proof}
Lemma \ref{derlemsym} implies that there exists $T$ such that $\forall t>T$ one has
$$
\frac{\langle A_t\rangle}{\sqrt{2}^{S_x(t)}}<\frac{1}{2}.
$$
Hence, as $\frac{x}{4}\ge 3$ for $x\ge 12$ and $\sqrt{2}^{\kappa_1}=\Phi$, one has by Lemma \ref{prodlem}
\begin{equation}
\label{auxcorrineq}
\frac{1}{2}>\frac{\langle A_t\rangle}{\sqrt{2}^{S_x(t)}}\ge \frac{1}{2}\frac{\Phi^t 3^{w(A_t)}}{\sqrt{2}^{\kappa_1 t+\varphi^{(1)}_x(t)}}=
\frac{1}{2}\frac{3^{w(A_t)}}{\sqrt{2}^{\varphi^{(1)}_x(t)}}.
\end{equation}
Statement of the lemma immediately follows from (\ref{auxcorrineq}).
\end{proof}
\section{Blocks structure}
\subsection{Parameters introduction}
Let $x=[0;a_1,a_2,\ldots,a_t,\ldots]$ be an irrational number such that $?'(x)=0$. By Lemma \ref{no2345}, without loss of generality one can say that either $a_i=1$ or $a_i\ge 6$ for all $i\in\mathbb{N}$. Throughout the remaining part of the paper we consider $\varepsilon$ as a fixed positive real number from the statements of Theorems \ref{maintheor1} and \ref{maintheor2}. Let $\lambda=\lambda(\varepsilon)$ be an arbitrary rational number such that $1>\lambda>1-\varepsilon^6.$ Define the following integer constants:
\begin{equation}
\label{constdef}
M=\frac{10\log{\varepsilon}}{\log{\lambda}},\ \ P=\biggl[\frac{\log{6}}{\log{(1+\varepsilon^2)}}\biggr]+1,\ \ N=2M(P+2).
\end{equation}
Now we select an integer parameter $t_0$ large enough such that $(1-\lambda)\lambda^N t_0>\frac{t_0}{\log{t_0}}$ and 
\begin{equation}
\label{t0progr}
\lambda^N t_0\in\mathbb{Z}.
\end{equation}
Denote $t_i=\lambda^i t_0$, where $1\le i\le N$. We define $B_i=(a_{t_i+1},a_{t_i+2},\ldots,a_{t_{i-1}})$ -- the $i$-th block, where $1\le i\le N$. Denote $t_{N+1}=0$ and $B_{N+1}=(a_{t_{N+1}+1},\ldots,a_{t_N})$. Thus, we have
$$
[0;A_{t_0}]=[0;a_1,\ldots,a_{t_0}]=[0;B_{N+1},B_N,\ldots,B_1].
$$
One can easily see that $S_x(t_0)=\sum\limits_{i=1}^{N+1}S(B_i)$ and  $\varphi^{(1)}_x(t_0)=\sum\limits_{i=1}^{N+1}\varphi^{(1)}_x(B_i)$.

For each block $B_i$ denote its greatest element by $M_i$ and the index of such element by $m_i$ (if the greatest element is not unique, we take the rightmost one). Thus, $a_{m_i}=M_i$. Denote $c_k=\frac{M_k}{\sqrt{t_{k-1}\log{t_0}}}$. Let us also consider for each $1\le i\le N+1$ the short block $B'_i=(a_{t_i+1},\ldots,a_{m_i-1})$. Note that
\begin{equation}
\label{phimisum}
\varphi^{(1)}_x(m_i)=\varphi^{(1)}(B_{N+1})+\varphi^{(1)}(B_N)+\ldots+\varphi^{(1)}(B_{i+1})+\varphi^{(1)}(B'_i)+(M_i-\kappa_1).
\end{equation}
For each $1\le i\le N+1$ define the real numbers $f_i$ and $f'_i$ from the following identities:
\begin{equation}
\label{fidef}
\langle B_i\rangle=\sqrt{2}^{S(B_i)+f_i\sqrt{t_{i-1}\log{t_0}}},\quad \langle B'_i\rangle=\sqrt{2}^{S(B'_i)+f'_i\sqrt{t_{i-1}\log{t_0}}}.
\end{equation}
\subsection{Lower estimate of $\varphi^{(1)}(B_k)$}
\begin{lem}
\label{fklogtkneg}
Suppose that $?'(x)=0$. Then for all $1\le i\le N$ one has
\begin{equation}
\label{sumfkneg}
f'_i\sqrt{t_{i-1}\log{t_0}}+\sum\limits_{k=i+1}^{N+1} f_k\sqrt{t_{k-1}\log{t_0}} <0.
\end{equation}
\end{lem}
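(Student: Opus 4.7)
The plan is to express the prefix $A_{m_i-1}$ as a concatenation of the outer blocks together with the initial segment $B'_i$, and then combine Lemma \ref{derlemsym} with the super-multiplicativity of continuants. Indeed, by the definitions of $B_k$ and $B'_i$ one has
$$A_{m_i-1}=(B_{N+1},B_N,\ldots,B_{i+1},B'_i),\qquad S_x(m_i-1)=S(B'_i)+\sum_{k=i+1}^{N+1}S(B_k).$$

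Next I would apply formula (\ref{contrulegen}) iteratively at the joints between consecutive blocks. Each correction factor there has the shape $1+[0;\ldots][0;\ldots]\ge 1$, so repeatedly gluing the blocks yields the super-multiplicative bound
$$\langle A_{m_i-1}\rangle\ge \langle B'_i\rangle\prod_{k=i+1}^{N+1}\langle B_k\rangle.$$
Now invoke Lemma \ref{derlemsym}: since $?'(x)=0$, for $t_0$ large enough (compare the proof of Lemma \ref{aquxfoll}) we have $\langle A_t\rangle/\sqrt{2}^{S_x(t)}<1/2$ for every $t\ge t_N$. The range $1\le i\le N$ gives $m_i-1\ge t_i\ge t_N$, so this estimate applies and yields
$$\langle B'_i\rangle\prod_{k=i+1}^{N+1}\langle B_k\rangle\le \langle A_{m_i-1}\rangle<\tfrac{1}{2}\sqrt{2}^{\,S_x(m_i-1)}.$$

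The final step is to feed the defining identities (\ref{fidef}) into the inequality above. Substituting $\langle B_k\rangle=\sqrt{2}^{\,S(B_k)+f_k\sqrt{t_{k-1}\log t_0}}$ and $\langle B'_i\rangle=\sqrt{2}^{\,S(B'_i)+f'_i\sqrt{t_{i-1}\log t_0}}$, the terms $S(B_k)$ and $S(B'_i)$ on the left telescope against $S_x(m_i-1)$ on the right, and taking $\log_{\sqrt{2}}$ leaves
$$f'_i\sqrt{t_{i-1}\log t_0}+\sum_{k=i+1}^{N+1}f_k\sqrt{t_{k-1}\log t_0}<-2<0,$$
which is the claim with a little room to spare. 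There is no real obstacle here: the argument is a direct synthesis of super-multiplicativity of continuants with Lemma \ref{derlemsym}. The only point requiring attention is that $t_0$ be taken large enough for the bound $\langle A_t\rangle/\sqrt{2}^{S_x(t)}<1/2$ to hold at each index $m_i-1$, which is automatic because $t_N\to\infty$ as $t_0\to\infty$ by the choice (\ref{t0progr}).
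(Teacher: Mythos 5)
Your proposal is correct and follows essentially the same route as the paper: decompose $A_{m_i-1}$ as $(B_{N+1},\ldots,B_{i+1},B'_i)$, apply Lemma \ref{derlemsym} to bound $\langle A_{m_i-1}\rangle/\sqrt{2}^{S_x(m_i-1)}$ by a constant below $1$, use the super-multiplicativity coming from (\ref{contrulegen}), substitute (\ref{fidef}) and take logarithms. The only cosmetic difference is that you work with the bound $1/2$ (giving $<-2$) where the paper is content with $<1$ (giving $<0$).
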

\begin{proof}
As $?'(x)=0$, by Lemma \ref{derlemsym}, without loss of generality one can say that for all $1\le i\le N$ the inequality
$$
\frac{\langle B_{N+1},B_N,\ldots,B_{i+1},B'_i\rangle}{\sqrt{2}^{S(B_{N+1})+S(B_N)+\ldots+S(B_{i+1})+S(B'_i)}}<1
$$
is satisfied. Using (\ref{contrulegen}) we obtain
\begin{equation}
\label{fklogtknegineq}
\frac{\langle B_{N+1}\rangle\langle B_N\rangle\ldots\langle B_{i+1}\rangle\langle B'_i\rangle}{\sqrt{2}^{S(B_{N+1})+S(B_N)+\ldots+S(B_{i+1})+S(B'_i)}}<1.
\end{equation}
Substituting (\ref{fidef}) to (\ref{fklogtknegineq}) and taking logarithm of both parts, we get the statement of the lemma.
\end{proof}
\begin{lem}
\label{uniformlem}
Suppose that $?'(x)=0$. If the inequality  
\begin{equation}
\label{nonuniformall}
|\varphi^{(1)}(B_k)|\ge\kappa_1(t_{k-1}-t_k)\varepsilon^5,
\end{equation}
holds for some $1\le k\le N+1$, then one has:
\begin{equation}
\max_{t_N\le u\le t_0}\varphi^{(1)}_{x}(u)\ge t_0^{0.9}.
\end{equation}
\end{lem}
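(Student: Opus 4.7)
The plan is to reduce the proof to a short sign-analysis of $\varphi^{(1)}(B_k)$, using the telescoping identity
\[
\varphi^{(1)}(B_k)=S(B_k)-\kappa_1(t_{k-1}-t_k)=\varphi^{(1)}_{x}(t_{k-1})-\varphi^{(1)}_{x}(t_{k})
\]
which follows directly from $S(B_k)=S_x(t_{k-1})-S_x(t_k)$ and the length $|B_k|=t_{k-1}-t_k$. Together with Lemma~\ref{greaterzero} (which guarantees $\varphi^{(1)}_{x}(u)>0$ for all $u$ large enough, and in particular for $u=t_j$ with $0\le j\le N$ once $t_0$ is sufficiently large), this identity lets me convert a large $|\varphi^{(1)}(B_k)|$ directly into a large value of $\varphi^{(1)}_{x}$ at one of the endpoints $t_{k-1}$ or $t_k$, both of which lie in $[t_N,t_0]$ when $1\le k\le N$.

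First I would split into two cases according to the sign of $\varphi^{(1)}(B_k)$. If $\varphi^{(1)}(B_k)\ge\kappa_1(t_{k-1}-t_k)\varepsilon^{5}$, then
\[
\varphi^{(1)}_{x}(t_{k-1})=\varphi^{(1)}_{x}(t_{k})+\varphi^{(1)}(B_k)\ge\kappa_1(t_{k-1}-t_k)\varepsilon^{5},
\]
because $\varphi^{(1)}_{x}(t_{k})\ge 0$ (this is Lemma~\ref{greaterzero} when $k\le N$, and the trivial equality $\varphi^{(1)}_{x}(0)=0$ when $k=N+1$). If instead $\varphi^{(1)}(B_k)\le -\kappa_1(t_{k-1}-t_k)\varepsilon^{5}$, then symmetrically
\[
\varphi^{(1)}_{x}(t_{k})=\varphi^{(1)}_{x}(t_{k-1})-\varphi^{(1)}(B_k)\ge\kappa_1(t_{k-1}-t_k)\varepsilon^{5},
\]
using $\varphi^{(1)}_{x}(t_{k-1})\ge 0$ from Lemma~\ref{greaterzero}.

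To finish I invoke the defining property of $t_0$: since $(1-\lambda)\lambda^{N}t_0>t_0/\log t_0$, for every $1\le k\le N$ one has
\[
t_{k-1}-t_k=(1-\lambda)\lambda^{k-1}t_0\ge(1-\lambda)\lambda^{N}t_0>\frac{t_0}{\log t_0},
\]
and the same lower bound holds for $k=N+1$ since then $t_{k-1}-t_k=\lambda^{N}t_0$. Hence in both sign cases the large endpoint value is at least $\kappa_1\varepsilon^{5}\,t_0/\log t_0$, which exceeds $t_0^{0.9}$ for all $t_0$ sufficiently large (depending only on the fixed $\varepsilon$). The endpoint chosen is one of $t_0,t_1,\dots,t_N$, all lying in $[t_N,t_0]$, so this yields the desired inequality $\max_{t_N\le u\le t_0}\varphi^{(1)}_{x}(u)\ge t_0^{0.9}$.

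The only subtle point, and the one I would double-check carefully, is the boundary case $k=N+1$ in the negative sub-case: there $t_k=0\notin[t_N,t_0]$, so the argument above would place the witness outside the allowed range. However, in that case $\varphi^{(1)}_{x}(t_{N+1})=\varphi^{(1)}_{x}(0)=0$, and $\varphi^{(1)}(B_{N+1})<0$ would force $\varphi^{(1)}_{x}(t_N)<0$, contradicting Lemma~\ref{greaterzero} for $t_0$ large enough. So this sub-case is vacuous and the argument goes through. Beyond this boundary check the proof is routine; no delicate continuant estimates are needed here, only the increment identity and the magnitude of the block length.
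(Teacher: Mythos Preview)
Your argument is correct and follows essentially the same route as the paper: both use the telescoping identity $\varphi^{(1)}_x(t_{k-1})=\varphi^{(1)}_x(t_k)+\varphi^{(1)}(B_k)$, split on the sign of $\varphi^{(1)}(B_k)$, invoke Lemma~\ref{greaterzero} for positivity at the other endpoint, and then bound $t_{k-1}-t_k$ below using the choice of $t_0$. Your treatment of the boundary case $k=N+1$ (where $t_{N+1}=0$) is in fact more careful than the paper's, which tacitly glosses over it.
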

\begin{proof}
Suppose that 
$$
\varphi^{(1)}(B_k)\ge\kappa_1(t_{k-1}-t_k)\varepsilon^5.
$$
As,
$$
\varphi^{(1)}_x(t_{k-1})=\varphi^{(1)}_x(t_{k})+\varphi^{(1)}(B_k),
$$
using the fact that $\varphi^{(1)}_x(t_{k})>0$ by Lemma \ref{greaterzero}, we have
$$
\max_{t_N\le u\le t_0}\varphi^{(1)}_{x}(u)\ge\varphi^{(1)}_x(t_{k-1})>\varphi^{(1)}(B_k)\ge\kappa_1(t_{k-1}-t_k)\varepsilon^5=\kappa_1\lambda^{k-1}(1-\lambda)t_0>t_0^{0.9}.
$$
On the other hand, if
$$
\varphi^{(1)}(B_k)\le-\kappa_1(t_{k-1}-t_k)\varepsilon^5,
$$
we again use the fact that $\varphi^{(1)}_x(t_{k-1})>0$ and obtain
$$
\max_{t_N\le u\le t_0}\varphi^{(1)}_{x}(u)\ge\varphi^{(1)}_x(t_{k})=\varphi^{(1)}_x(t_{k-1})-\varphi^{(1)}(B_k)\ge\kappa_1(t_{k-1}-t_k)\varepsilon^5>t_0^{0.9}.
$$
\end{proof}
One can easily deduce from Lemma \ref{uniformlem} that if the inequality (\ref{nonuniformall}) holds, then the inequalities (\ref{mainineqth1}) and (\ref{mainineqth21}) are also satisfied. Therefore, throughout the remaining part of the paper we will assume that 
\begin{equation}
\label{sbkass}
S(B_k)=\kappa_1(t_{k-1}-t_k)(1+o(\varepsilon^4))
\end{equation}
for all $1\le k\le N+1$.
\begin{lem}
\label{philowest}
Suppose that $?'(x)=0$. Then for all $1\le k\le N+1$ one has
\begin{equation}
\label{genest}
\varphi^{(1)}(B_k)\ge\frac{(\kappa_1-1)(t_{k-1}-t_k)\sqrt{\log t_0}(1+o(\varepsilon^4))}{c_k\sqrt{t_{k-1}}\log{2}}-f_k\sqrt{t_{k-1}\log{t_0}}.
\end{equation}
\end{lem}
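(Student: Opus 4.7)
The plan is to combine Lemma~\ref{prodlem}, which lower-bounds $\langle B_k\rangle$ using the entries of $B_k$ that exceed $1$, with Lemma~\ref{sumprodlem}, which in turn lower-bounds such a product in terms of its maximum $M_k$. By Lemma~\ref{no2345} we may assume every entry of $B_k$ strictly greater than $1$ is at least $12$, so the rescaled values $d_{B_k}(i)/4$ lie in $[3,M_k/4]$ and fulfill the hypotheses of Lemma~\ref{sumprodlem}. Applying Lemma~\ref{prodlem} to $B_k$, taking $\log_{\sqrt{2}}$ of both sides, and invoking the defining identity~\eqref{fidef} of $f_k$ together with $\Phi=\sqrt{2}^{\kappa_1}$ yields
\[
\varphi^{(1)}(B_k)+f_k\sqrt{t_{k-1}\log t_0}\;\ge\;\frac{2}{\log 2}\sum_{i=1}^{w(B_k)}\log(d_{B_k}(i)/4)-2.
\]
Applying Lemma~\ref{sumprodlem} with $\alpha=3$ and $\beta=M_k/4$ to the sequence $(d_{B_k}(i)/4)$ (whose sum is $D/4$ where $D=\sum d_{B_k}(i)$) then gives $\sum\log(d_{B_k}(i)/4)\ge (D/M_k-1)\log(M_k/4)$.

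The next step is to rewrite $D$ and $\log(M_k/4)$ in the shape of the right-hand side of~\eqref{genest}. The entries of $B_k$ are either $1$ or one of the $d_{B_k}(i)$'s, so $S(B_k)=(t_{k-1}-t_k)-w(B_k)+D$; comparing with~\eqref{sbkass} gives $D=(\kappa_1-1)(t_{k-1}-t_k)(1+o(\varepsilon^4))+w(B_k)$. Since $M_k=c_k\sqrt{t_{k-1}\log t_0}$ and $t_{k-1}=\lambda^{k-1}t_0$ with $k\le N+1$ and $\lambda,N$ depending only on $\varepsilon$, one has $\log t_{k-1}=\log t_0\,(1-o(\varepsilon^4))$ for $t_0$ large, so $2\log(M_k/4)=\log t_0\,(1+o(\varepsilon^4))$ in the regime $\log c_k=o(\log t_0)$. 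Combining these, the leading term of the lower bound becomes
\[
\frac{2}{\log 2}\cdot\frac{D}{M_k}\log(M_k/4)\;=\;\frac{(\kappa_1-1)(t_{k-1}-t_k)\log t_0}{M_k\log 2}(1+o(\varepsilon^4))\;=\;\frac{(\kappa_1-1)(t_{k-1}-t_k)\sqrt{\log t_0}}{c_k\sqrt{t_{k-1}}\log 2}(1+o(\varepsilon^4)),
\]
which is precisely the first term on the right of~\eqref{genest}; the $-f_k\sqrt{t_{k-1}\log t_0}$ term arrives by transposition.

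The hardest part will be collecting the error terms. The crude bound $w(B_k)\le (t_{k-1}-t_k)/12$ (from $d_{B_k}(i)\ge 12$) absorbs the $w(B_k)$-contribution to $D$ into the $(1+o(\varepsilon^4))$ factor, and the constants $-2$ and $-(2\log(M_k/4))/\log 2$ are easily dominated by the main term. The most delicate point is the passage $2\log(M_k/4)=\log t_0\,(1+o(\varepsilon^4))$: it relies on $\log c_k=o(\log t_0)$, which requires careful bookkeeping against the $\varepsilon$-dependent constants~\eqref{constdef} to ensure the approximation holds uniformly in $k$.
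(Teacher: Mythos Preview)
Your approach is essentially the paper's: apply Lemma~\ref{prodlem} to $\langle B_k\rangle$, feed the product $\prod d_{B_k}(i)/4$ into Lemma~\ref{sumprodlem} with $\alpha=3$ and $\beta=M_k/4$, then take logarithms and use~\eqref{fidef} together with $\Phi=\sqrt 2^{\kappa_1}$. The paper applies Lemma~\ref{sumprodlem} before taking logarithms and you do it after, but the computation is identical.

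One correction on the error accounting. Your claim that the crude bound $w(B_k)\le (t_{k-1}-t_k)/12$ ``absorbs the $w(B_k)$-contribution to $D$ into the $(1+o(\varepsilon^4))$ factor'' is not right: that bound is of order $(t_{k-1}-t_k)$, not $o(\varepsilon^4)(t_{k-1}-t_k)$, so it cannot be swallowed by the $o(\varepsilon^4)$. Fortunately you do not need it. Since $w(B_k)\ge 0$, your identity $D=(\kappa_1-1)(t_{k-1}-t_k)(1+o(\varepsilon^4))+w(B_k)$ already gives $D\ge (\kappa_1-1)(t_{k-1}-t_k)(1+o(\varepsilon^4))$, and that one-sided inequality is all that is required for the lower bound~\eqref{genest}. (The paper obtains the same one-sided estimate, citing Lemmas~\ref{uniformlem} and~\ref{aquxfoll}; your route through~\eqref{sbkass} alone is cleaner.)

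Your diagnosis of the ``most delicate point'' is accurate: the replacement $2\log(M_k/4)=\log t_0\,(1+o(\varepsilon^4))$ genuinely requires $\log c_k=o(\log t_0)$, and this is not automatic from the hypotheses of the lemma alone. The paper makes the same replacement without further comment (``after some transformations''). In the downstream applications the $c_k$ that matter are bounded above and below by constants depending only on $\varepsilon$ (see the arguments of Lemmas~\ref{phi1lem} and~\ref{iterlem2}), so the replacement is legitimate there; but you are right that as a self-contained statement the estimate needs that caveat.
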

\begin{proof}
We recall that $w(B_k)$ is the number of elements of the block $B_k$ which are greater than $1$ and $d_{B_k}(1),\ldots,d_{B_k}(w(B_k))$ is the sequence of such elements. It follows from Lemma \ref{uniformlem} and Lemma \ref{aquxfoll} that
$$
\sum\limits_{i=1}^{w(B_k)}d_{B_k}(i)=(\kappa_1-1)(t_{k-1}-t_k)(1+o(\varepsilon^4)).
$$
Therefore, as $d_{B_k}(i)\ge 12$ for all $i$, one has $\frac{d_{B_k}(i)}{4}\ge 3$. Now we can obtain a lower estimate of 
$$
\prod\limits_{i=1}^{w(B_k)} \biggl(\frac{d_{B_k}(i)}{4}\biggr)
$$
applying Lemma \ref{sumprodlem} for $s=\frac{1}{4}(\kappa_1-1)(t_{k-1}-t_k)(1+o(\varepsilon^4))$, $\alpha=3$, and $\beta=c_k\sqrt{t_{k-1}
\log{t_0}}$. We have
\begin{equation}
\label{prodlowest}
\prod\limits_{i=1}^{w(B_k)} \biggl(\frac{d_{B_k}(i)}{4}\biggr)\ge \biggl(\frac{c_k\sqrt{t_{k-1}\log{t_0}}}{4}\biggr)^{\frac{(\kappa_1-1)(t_{k-1}-t_k)(1+o(\varepsilon^4))}{c_k\sqrt{t_{k-1}\log{t_0}}}}.
\end{equation}
Substituting the estimate (\ref{prodlowest}) to (\ref{contlowest}) we obtain
\begin{equation}
\label{biest}
\langle B_k\rangle \ge \Phi^{t_{k-1}-t_k} \biggl(\frac{c_k\sqrt{t_{k-1}\log{t_0}}}{4}\biggr)^{\frac{(\kappa_1-1)(t_{k-1}-t_k)(1+o(\varepsilon^4))}{c_k\sqrt{t_{k-1}\log{t_0}}}}.
\end{equation}
Taking into account that $\sqrt{2}^{\kappa_1}=\Phi$, from (\ref{fidef}), (\ref{biest}) and Lemma \ref{derlemsym} we get:
\begin{equation}
\label{beforelog}
\biggl(\frac{9c_i\sqrt{t_{k-1}\log{t_0}}}{16}\biggr)^{\frac{(\kappa_1-1)(t_{k-1}-t_k)(1+o(\varepsilon^4))}{c_i\sqrt{t_{k-1}\log{t_0}}}}\le
\sqrt{2}^{\varphi^{(1)}(B_k)+f_i\sqrt{t_{k-1}\log{t_0}}}.
\end{equation}
Taking logarithms of both parts of (\ref{beforelog}), after some transformations we obtain the inequality
\begin{equation}
\frac{(\kappa_1-1)(t_{k-1}-t_k)(1+o(\varepsilon^4))}{c_i\sqrt{t_{k-1}\log{t_0}}}\frac{\log t_0}{2}\le(\varphi^{(1)}(B_k)+f_k\sqrt{t_{k-1}\log{t_0}})\log\sqrt{2}
\end{equation}
which is equivalent to (\ref{genest}). Lemma is proved.
\end{proof}
\subsection{Main estimate}
For $k\le N$ one can write  (\ref{genest}) as
\begin{equation}
\label{lenest}
\varphi^{(1)}(B_k)\ge\biggl(\frac{(\kappa_1-1)(1-\lambda)(1+o(\varepsilon^4))}{c_k\log{2}}-f_k\biggr)\sqrt{t_{k-1}\log{t_0}}.
\end{equation}
For $k=N+1$ we have
\begin{equation}
\label{eqnplus1est}
\varphi^{(1)}(B_{N+1})\ge\biggl(\frac{(\kappa_1-1)(1+o(\varepsilon^4))}{c_{N+1}\log{2}}-f_{N+1}\biggr)\sqrt{t_{N}\log{t_0}}.
\end{equation}
Let the greatest element of the short block $B'_k$ be equal to $c'_k\sqrt{t_{k-1}\log{t}}$. Using the argument from Lemma \ref{philowest}, one can deduce the following lower estimate of $\varphi^{(1)}(B'_k)$:
\begin{equation}
\label{genest2}
\varphi^{(1)}(B'_k)\ge\frac{S(B'_k)\sqrt{\log t_0}(1+o(\varepsilon^4))}{c'_k\sqrt{t_{k-1}}\log{2}}-f'_k\sqrt{t_{k-1}\log{t_0}}.
\end{equation}

\begin{lem}
\label{mainlowest}
Suppose that $?'(x)=0$. Then for all $1\le i\le N$ one has
\begin{equation}
\label{mainlowineq}
\varphi^{(1)}_x(m_i)\ge\Biggl(\frac{(\kappa_1-1)(1-\lambda)}{\log{2}}\biggl(\sum\limits_{k=i+1}^{N} \frac{(\sqrt{\lambda})^{k-i}}{c_k}\biggr)+c_i\Biggr)\sqrt{t_{i-1}\log{t_0}}\bigl(1+o(\varepsilon^4)\bigr).
\end{equation}
\end{lem}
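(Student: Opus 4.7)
The plan is to combine the block-wise lower bounds established just before the lemma with the additive decomposition of $\varphi^{(1)}_x(m_i)$ in (\ref{phimisum}), then use Lemma \ref{fklogtkneg} to eliminate all of the $f$-terms at once.

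Concretely, I would write
$$\varphi^{(1)}_x(m_i)=\varphi^{(1)}(B'_i)+\sum_{k=i+1}^{N+1}\varphi^{(1)}(B_k)+(M_i-\kappa_1)$$
and substitute the block estimates: (\ref{lenest}) for each $i+1\le k\le N$, the endpoint estimate (\ref{eqnplus1est}) for $k=N+1$, and (\ref{genest2}) for $B'_i$. Since $t_{k-1}=\lambda^{k-1}t_0$, the identity $\sqrt{t_{k-1}\log t_0}=(\sqrt{\lambda})^{k-i}\sqrt{t_{i-1}\log t_0}$ puts every bound on the common scale $\sqrt{t_{i-1}\log t_0}$ and produces exactly the geometric factor $(\sqrt{\lambda})^{k-i}$ attached to $1/c_k$ in the target sum. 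The trailing term satisfies $M_i-\kappa_1=c_i\sqrt{t_{i-1}\log t_0}-\kappa_1=c_i\sqrt{t_{i-1}\log t_0}(1+o(\varepsilon^4))$, because $\kappa_1$ is a fixed constant and $t_{i-1}\ge t_N=\lambda^N t_0\to\infty$ with $t_0$.

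Next, the $-f_k\sqrt{t_{k-1}\log t_0}$ and $-f'_i\sqrt{t_{i-1}\log t_0}$ contributions assemble into
$$-\Bigl(f'_i\sqrt{t_{i-1}\log t_0}+\sum_{k=i+1}^{N+1}f_k\sqrt{t_{k-1}\log t_0}\Bigr),$$
which is positive by Lemma \ref{fklogtkneg}, and so may simply be dropped without weakening the bound. The main contribution from $B_{N+1}$, which is $\frac{(\kappa_1-1)(1+o(\varepsilon^4))}{c_{N+1}\log 2}(\sqrt{\lambda})^{N+1-i}\sqrt{t_{i-1}\log t_0}\ge 0$, and the main contribution from $B'_i$, which is $\frac{S(B'_i)\sqrt{\log t_0}(1+o(\varepsilon^4))}{c'_i\sqrt{t_{i-1}}\log 2}\ge 0$, are likewise non-negative and can be discarded, because the target inequality (\ref{mainlowineq}) only asserts the sum over $i+1\le k\le N$ together with the $c_i$-term coming from $M_i$. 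What remains is exactly the stated lower bound.

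The step that needs the most care is keeping the $o(\varepsilon^4)$ factors uniformly small: each application of (\ref{lenest}) attaches a $(1+o(\varepsilon^4))$ to a single summand, and after summing over $i+1\le k\le N$ these corrections must still be $o(\varepsilon^4)$ relative to the leading sum. This is where the global hypothesis (\ref{sbkass}), which was forced by Lemma \ref{uniformlem}, together with the calibration of $M$, $P$, $N$ and $\lambda$ in (\ref{constdef}), is needed. Once that bookkeeping is handled, the lemma is just a direct assembly of the block bounds.
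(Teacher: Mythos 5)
Your proposal is correct and follows essentially the same route as the paper's own proof: both substitute the block estimates (\ref{genest})/(\ref{lenest}), (\ref{eqnplus1est}) and (\ref{genest2}) into the decomposition (\ref{phimisum}), discard the aggregated $f$-terms using Lemma \ref{fklogtkneg} and the non-negative contributions of $B_{N+1}$ and $B'_i$, and then rescale via $t_k=\lambda^k t_0$ to obtain the common factor $\sqrt{t_{i-1}\log t_0}$ with the geometric weights $(\sqrt{\lambda})^{k-i}$. The treatment of $M_i-\kappa_1$ and the $(1+o(\varepsilon^4))$ bookkeeping also matches the paper, so there is no gap.
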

\begin{proof}
Substituting the estimates (\ref{genest}) for $k=N,N-1,\ldots,i+1$, (\ref{eqnplus1est}), and (\ref{genest2}) to (\ref{phimisum}) and taking into account the inequality (\ref{sumfkneg}) we obtain:
\begin{equation}
\begin{split}
\label{bigsumi}
\varphi^{(1)}_x(m_i)\ge(1+o(\varepsilon^4))\biggl(\frac{\kappa_1-1}{\log{2}c_{N+1}}\sqrt{t_{N}\log{t_0}}+
\frac{S(B_i)\sqrt{\log t_0}}{\log{2}c'_i\sqrt{t_{i-1}}}+\\
\frac{(\kappa_1-1)\sqrt{\log t_0}}{\log{2}}\sum\limits_{k=i+1}^{N} \frac{t_{k-1}-t_k}{c_k\sqrt{t_{k-1}}}\biggr)+(c_i\sqrt{t_{i-1}\log{t_0}}-\kappa_1).
\end{split}
\end{equation}
Using the trivial estimates
$$
\frac{S(B_i)\sqrt{\log t_0}}{\log{2}c'_i\sqrt{t_{i-1}}}>0, \quad \frac{\kappa_1-1}{\log{2}c_{N+1}}\sqrt{t_{N}\log{t_0}}>0
$$
we obtain
\begin{equation}
\begin{split}
\label{bigsumi2}
\varphi^{(1)}_x(m_i)\ge(1+o(\varepsilon^4))\biggl(
\frac{(\kappa_1-1)\sqrt{\log t_0}}{\log{2}}\sum\limits_{k=i+1}^{N} \frac{t_{k-1}-t_k}{c_k\sqrt{t_{k-1}}}\biggr)+(c_i\sqrt{t_{i-1}\log{t_0}}-\kappa_1).
\end{split}
\end{equation}
Taking into account the fact that $t_{k}=\lambda^kt_0$, we immediately obtain (\ref{mainlowineq}). The lemma is proved.
\end{proof}
Inequality (\ref{mainlowineq}) is the key tool that we will use in proofs of Theorem \ref{maintheor1} and the first statement of Theorem \ref{maintheor2}. Let us simplify it using a new notation. Denote
\begin{equation}
\label{alphadef}
\alpha=\frac{(\kappa_1-1)}{\log{2}},\quad \eta=\frac{1}{\alpha}.
\end{equation}
Then, one can rewrite (\ref{mainlowineq}) as follows:
\begin{equation}
\begin{split}
\label{bigsumfullsimple}
\varphi^{(1)}_x(m_i)\ge\biggl((1-\lambda)\sum\limits_{k=i+1}^{N} \frac{(\sqrt{\lambda})^{k-i}}{c_k}+\eta c_i\biggr)\alpha\sqrt{t_{i-1}\log{t_0}}.\bigl(1+o(\varepsilon^4)\bigr).
\end{split}
\end{equation}
\section{Key lemmas}
The inequality (\ref{bigsumfullsimple}) reduces the estimation of $\max\limits_{1\le i\le N}\varphi^{(1)}_x(m_i)$ to the problem of finding maximum of the following quantity:
$$
(1-\lambda)\sum\limits_{k=i+1}^{N} \frac{(\sqrt{\lambda})^{k-i}}{c_k}+\eta c_i, \quad i=1,2,\ldots,N.
$$
Note that this problem does not deal with the Minkowski function, continued fractions etc. It is purely combinatorial. The following lemma allows us to estimate the desired maximum.
\begin{lem}
\label{sumlem}
Let $\eta, c_1,c_2,\ldots, c_N$ be arbitrary positive real numbers. Define the real numbers $\varphi_i$ as follows:
\begin{equation}
\label{phikdef}
\varphi_i=(1-\lambda)\sum\limits_{k=i+1}^{N}\frac{\sqrt{\lambda}^{k-i}}{c_k}+\eta c_i.
\end{equation}
Then the following inequality holds:
\begin{equation}
\label{keyineq2}
\max\limits_{1\le i\le N}\varphi_i\ge\sqrt{8\eta}(1+o(\varepsilon)).
\end{equation}
\end{lem}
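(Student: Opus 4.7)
The lemma is purely combinatorial and I will prove it by a weighted-average trick combined with the AM--GM inequality. Since $\max_i\varphi_i\ge\bigl(\sum_i w_i\varphi_i\bigr)/\bigl(\sum_i w_i\bigr)$ for any nonnegative weights $w_1,\ldots,w_N$ not all zero, it is enough to produce a single choice of weights making the right-hand side at least $\sqrt{8\eta}(1+o(\varepsilon))$ uniformly in the positive reals $c_1,\ldots,c_N$. I will take $w_1=0$ and $w_i=1/(N-1)$ for $i=2,\ldots,N$.

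Swapping the order of summation gives
\[
\sum_{i=1}^N w_i\varphi_i=\eta\sum_{i=1}^N w_ic_i+(1-\lambda)\sum_{k=2}^N\frac{B_k}{c_k},\qquad B_k:=\sum_{i=1}^{k-1}w_i\sqrt{\lambda}^{k-i}.
\]
The choice $w_1=0$ eliminates $c_1$ entirely (otherwise the adversary could send $c_1\to 0$); since $B_2=0$, the nonnegative term $\eta w_2c_2$ is simply discarded. For each $k\ge 3$ the AM--GM inequality gives $\eta w_kc_k+(1-\lambda)B_k/c_k\ge 2\sqrt{\eta(1-\lambda)w_kB_k}$, and therefore
\[
\max_{1\le i\le N}\varphi_i\ \ge\ 2\sqrt{\eta(1-\lambda)}\sum_{k=3}^N\sqrt{w_kB_k}.
\]

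For the uniform weights, a geometric-series computation yields $B_k=\sqrt{\lambda}(1-\sqrt{\lambda}^{k-2})/\bigl((N-1)(1-\sqrt{\lambda})\bigr)$, whence
\[
\sum_{k=3}^N\sqrt{w_kB_k}=\frac{1}{N-1}\sqrt{\frac{\sqrt{\lambda}}{1-\sqrt{\lambda}}}\sum_{m=1}^{N-2}\sqrt{1-\sqrt{\lambda}^{m}}.
\]
By the definition \eqref{constdef} of $M$, the inequality $\sqrt{\lambda}^{m}\le\varepsilon^2$ holds once $m\ge 2M/5$, while $M/N=O(1/P)=O(\varepsilon^2)$; combining these observations yields $\sum_{m=1}^{N-2}\sqrt{1-\sqrt{\lambda}^{m}}\ge(N-1)(1-o(\varepsilon))$. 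Together with the identity $(1-\lambda)/(1-\sqrt{\lambda})=1+\sqrt{\lambda}$ and the estimate $\sqrt{\sqrt{\lambda}(1+\sqrt{\lambda})}=\sqrt{2}(1-o(\varepsilon))$ valid for $\lambda>1-\varepsilon^6$, this produces
\[
\max_i\varphi_i\ge 2\sqrt{\eta}\,\sqrt{\sqrt{\lambda}(1+\sqrt{\lambda})}\,(1-o(\varepsilon))=\sqrt{8\eta}\,(1+o(\varepsilon)).
\]
The main obstacle is confirming that the uniform weight is asymptotically optimal. This is plausible because the Euler--Lagrange condition for the continuous relaxation (requiring all $\varphi_i$ equal) admits the constant solution $c_k\equiv\sqrt{2/\eta}$, at which AM--GM is sharp and each $\varphi_i$ equals exactly $\sqrt{8\eta}$; the remaining work is bookkeeping tailored to the parameter choices \eqref{constdef}, which are calibrated precisely so that the discretization and truncation corrections are $o(\varepsilon)$.
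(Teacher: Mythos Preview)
Your argument is correct and complete; the closing paragraph is only heuristic motivation and is not needed for the proof, since you have already computed that the uniform weights yield the bound $\sqrt{8\eta}(1+o(\varepsilon))$.

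Your route is genuinely different from the paper's. The paper argues by contradiction: assuming $\max_i\varphi_i<\sqrt{8\eta}(1+o(\varepsilon))$, it first shows (Lemma~\ref{phi1lem}) that some $c_{i_1}$ with $i_1\le M$ satisfies $c_{i_1}\ge\tfrac{1}{2\sqrt\eta}$ while all $c_i<\tfrac{3}{\sqrt\eta}$, and then an iteration step (Lemma~\ref{iterlem2}) forces, in every window of $M$ consecutive indices, a jump $c_{i_{m+1}}>(1+\varepsilon^2)c_{i_m}$; after $P$ iterations this pushes some $c_i$ past $3/\sqrt\eta$, a contradiction. Your proof is direct: a single weighted average with uniform weights on $\{2,\ldots,N\}$, the order-swap identity, and termwise AM--GM. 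The advantage of your approach is brevity and transparency --- the lower bound appears in one stroke once the weights are chosen, and the only analytic input is the geometric-series estimate $\sum_{m}\sqrt{1-\sqrt\lambda^{\,m}}=(N-1)(1-o(\varepsilon))$. The paper's iterative method, by contrast, exposes structural information (if the maximum is small, the sequence $c_i$ must grow geometrically along a subsequence), and this style of argument is closer in spirit to the optimisation framework the paper uses for the companion Lemma~\ref{sumlemprime}, where the extra factor $\sqrt\lambda^{\,i}$ makes a flat weighted-average argument less immediate.
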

\begin{proof}
The proof of the lemma will be splitted into several steps. We also recall that the constants $M$, $N$, and $P$ used in our argument are defined in (\ref{constdef}).
\begin{lem}
\label{phi1lem}
Suppose that the inequality (\ref{keyineq2}) is not satisfied. Then there exists a natural number $i_1\le M$ such that $c_{i_1}\ge\frac{1}{2\sqrt{\eta}}$. Moreover, for all $i\le N$ one has
\begin{equation}
\label{3c0}
c_i<\frac{3}{\sqrt{\eta}}.
\end{equation}
\end{lem}
\begin{proof}
Suppose the contrary. Let us estimate $\varphi_1$ from below:
\begin{equation}
\label{phi1est}
\begin{split}
\varphi_1\ge(1-\lambda)\sum\limits_{k=1}^{N}\frac{\sqrt{\lambda}^{k}}{c_{k+1}}+\eta c_1\ge(1-\lambda)\sum\limits_{k=1}^{M}\frac{\sqrt{\lambda}^{k}}{c_{k+1}}\ge 2\sqrt{\eta}(1-\lambda)\sum\limits_{k=1}^{M}\sqrt{\lambda}^{k}=\\
= 2\sqrt{\eta}(1-\lambda)\sqrt{\lambda}\frac{1-\sqrt{\lambda}^M}{1-\sqrt{\lambda}}=2\sqrt{\eta}(1+\sqrt{\lambda})(1+o(\varepsilon^4))>\sqrt{8\eta}(1+o(\varepsilon^4)).
\end{split}
\end{equation}
We obtain a contradiction with (\ref{keyineq2}). The estimate (\ref{3c0}) comes from the trivial inequality $\varphi_i>\eta c_i$. Lemma is proved.
\end{proof}
\begin{lem}
\label{iterlem2}
Suppose that the inequality (\ref{keyineq2}) is not satisfied. Then for all $i_m<N-M,\  m\ge 1$ there exists a number $i_m<i_{m+1}<i_m+M$ such that $c_{i_{m+1}}>(1+\varepsilon^2)c_{i_m}.$
\end{lem}
\begin{proof}
Suppose the contrary. Let the inequality $c_{i_m+j}<c_{i_m}(1+\varepsilon^2)$ be satisfied for all $1\le j\le M$. Then, using the argument from Lemma \ref{phi1lem} we obtain
\begin{equation}
\label{phikest}
\begin{split}
\varphi_{i_m}\ge(1-\lambda)\sum\limits_{j=1}^{M}\frac{\sqrt{\lambda}^{j}}{c_{i_m+j}}+\eta c_{i_m}\ge\frac{1-\lambda}{1+\varepsilon^2}\sum\limits_{j=1}^{M}\frac{\sqrt{\lambda}^{j}}{c_{i_m}}+\eta c_{i_m}\ge\\
\biggl(\frac{2}{(1+\varepsilon^2)c_{i_m}}+\eta c_{i_m}\biggr)(1+o(\varepsilon^4))\ge \sqrt{\frac{8\eta}{1+\varepsilon^2}}\bigl(1+o(\varepsilon^4)\bigr).
\end{split}
\end{equation}
In the last ''$\ge$'' of (\ref{phikest}) we use the Cauchy-Schwarz inequality. We come to a contradiction with  (\ref{keyineq2}). Lemma is proved.
\end{proof}
Now we are ready to prove Lemma \ref{sumlem}. Suppose that the inequality (\ref{keyineq2}) is not satisfied. By Lemma \ref{phi1lem} there exists $1\le i_1<M$ such that $c_i\ge\frac{1}{2\sqrt{\eta}}$. Applying Lemma \ref{iterlem2}\\ $P=\biggl[\frac{\log{6}}{\log{(1+\varepsilon^2)}}\biggr]+1$ times, we obtain the number $c_{i_{P+1}}$ such that  $c_{i_{P+1}}>\frac{1}{2\sqrt{\eta}}(1+\varepsilon^2)^P \eta>\frac{3}{\sqrt{\eta}}$. We come to a contradiction with (\ref{3c0}). Lemma is proved.
\end{proof}
One can rewrite (\ref{bigsumfullsimple}) as 
\begin{equation}
\begin{split}
\label{bigsumfullsimpleprime}
\varphi^{(1)}_x(m_i)\ge\sqrt{\lambda}^i\biggl((1-\lambda)\sum\limits_{k=i+1}^{N} \frac{(\sqrt{\lambda})^{k-i}}{c_k}+\eta c_i\biggr)\alpha\sqrt{t_0\log{t_0}}(1+o(\varepsilon^4)).
\end{split}
\end{equation}
Note that $\sqrt{\lambda}=1+o(\varepsilon^5)$. As Theorem \ref{maintheor2} requires us to show that $\max\limits_{1\le i\le N}\varphi^{(1)}_x(m_i)$ is greater than $\sqrt{t_0\log{t_0}}$ multiplied by some constant factor, we need to estimate the maximum of the following quantity
$$
\lambda^i\biggl((1-\lambda)\sum\limits_{k=i+1}^{N} \frac{(\sqrt{\lambda})^{k-i}}{c_k}+\eta c_i\biggr), \quad i=1,2,\ldots,N.
$$
This estimate is provided by the following lemma.
\begin{lem}
\label{sumlemprime}
Let $C=(c_1,c_2,\ldots, c_N)$ be an arbitrary sequence of non-negative real numbers. Let $\eta$ be an arbitrary positive real number. Define the numbers $\varphi_i$ using (\ref{phikdef}). Define the numbers $\varphi'_i$ as follows:
\begin{equation}
\label{phikprimedef}
\varphi'_i(C)=\sqrt{\lambda}^i\varphi_i=(1-\lambda)\sum\limits_{k=i+1}^{N}\frac{\sqrt{\lambda}^{k}}{c_k}+\sqrt{\lambda}^i\eta c_i.
\end{equation}
Then one has:
\begin{equation}
\label{keyineq2prime}
\max\limits_{1\le i\le N}\varphi'_i(C)\ge\sqrt{2\eta}(1+o(\varepsilon)).
\end{equation}
\end{lem}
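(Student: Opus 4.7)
The iterative ``growth'' strategy of Lemma \ref{sumlem} does not transfer cleanly to the primed setting: it is $\sqrt{\lambda}^ic_i$ rather than $c_i$ that is uniformly bounded (now by $V/\eta$ where $V$ is an assumed upper bound on $\max_i\varphi'_i$), and the multiplicative step of Lemma \ref{iterlem2} breaks down once $\sqrt{\lambda}^{i_m}$ falls below $1/2$, long before the $P$ iterations can finish. I therefore propose to establish (\ref{keyineq2prime}) by a direct weighted-sum inequality, with weights designed to be tight against the extremizing configuration (the same profile $c_i\sim $ const$\cdot\sqrt{\lambda}^{-i}$ underlying the construction in item (ii) of Theorem \ref{maintheor2}). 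Set $P_i=\sqrt{1-\lambda^i}$ for $0\le i\le N$ and $p_i=P_i-P_{i-1}$; these are nonnegative, and their total $P_N=\sqrt{1-\lambda^N}$ equals $1+o(\varepsilon)$ thanks to the choice of $N$ in (\ref{constdef}), which makes $\lambda^N$ smaller than any fixed positive power of $\varepsilon$.

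\textbf{Key step.} Swapping the order of summation in (\ref{phikprimedef}) and applying AM--GM termwise to the two occurrences of $c_k$ gives
\[
\sum_{i=1}^N p_i\varphi'_i=(1-\lambda)\sum_{k=1}^N P_{k-1}\frac{\sqrt{\lambda}^k}{c_k}+\eta\sum_{k=1}^N p_k\sqrt{\lambda}^k c_k\ge 2\sqrt{(1-\lambda)\eta}\sum_{k=1}^N\sqrt{\lambda}^k\sqrt{P_{k-1}p_k}.
\]
The identity $P_k^2-P_{k-1}^2=\lambda^{k-1}(1-\lambda)$ combined with $p_k=P_k-P_{k-1}$ yields $p_k=\lambda^{k-1}(1-\lambda)/(P_k+P_{k-1})$ and hence $P_{k-1}p_k=\lambda^{k-1}(1-\lambda)\cdot P_{k-1}/(P_k+P_{k-1})\sim\lambda^{k-1}(1-\lambda)/2$ on the bulk of indices. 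Consequently,
\[
\sum_{k=1}^N\sqrt{\lambda}^k\sqrt{P_{k-1}p_k}=\sqrt{\tfrac{1-\lambda}{2}}\sum_{k=1}^N\lambda^{k-1/2}(1+o(\varepsilon))=\frac{1+o(\varepsilon)}{\sqrt{2(1-\lambda)}}.
\]
Combining the two displays, $\sum_i p_i\varphi'_i\ge\sqrt{2\eta}(1+o(\varepsilon))$, and dividing by $\sum_i p_i=1+o(\varepsilon)$ gives $\max_i\varphi'_i\ge\sqrt{2\eta}(1+o(\varepsilon))$, which is (\ref{keyineq2prime}).

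\textbf{Main obstacle.} The only non-routine point is justifying the asymptotic $P_{k-1}p_k\sim\lambda^{k-1}(1-\lambda)/2$ uniformly in $\varepsilon$: the ratio $P_{k-1}/(P_k+P_{k-1})$ differs noticeably from $1/2$ near the boundary $k=2$ (where $P_1=\sqrt{1-\lambda}$ is atypically small compared to $P_2\sim\sqrt{2(1-\lambda)}$), and it drifts again near $k=N$. Both endpoint regions contribute only an $o(\varepsilon)$ share of the total, because the geometric weight $\lambda^{k-1/2}$ concentrates the mass on $k$ of order $1/(1-\lambda)$, which sits safely inside $[1,N]$ by the definition (\ref{constdef}) (with $(1-\lambda)>\varepsilon^6$ and $\lambda^N$ vanishing faster than any power of $\varepsilon$). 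The precise bookkeeping of these tail errors, rather than the main-order computation, is where most of the actual writing will go.
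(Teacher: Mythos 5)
Your proof is correct, and it takes a genuinely different route from the paper. The paper treats (\ref{keyineq2prime}) as a min--max problem: after the substitution $d_j=\sqrt{\lambda}^j c_j$ it shows (Lemma \ref{minmaxphilem}) that the minimizing sequence equalizes all the quantities $\tilde{\varphi}'_k$, derives from this the recurrence (\ref{recal}), interprets it as a discretization of $y'=1/y$, and bounds its solution by comparison with $\sqrt{2X_n}$. You instead prove the bound in one shot by a weighted average: with $p_k=\sqrt{1-\lambda^k}-\sqrt{1-\lambda^{k-1}}\ge 0$, swapping the order of summation and applying term-wise AM--GM gives $\sum_k p_k\varphi'_k\ge 2\sqrt{(1-\lambda)\eta}\sum_k\sqrt{\lambda}^k\sqrt{P_{k-1}p_k}$, and the identity $P_{k-1}p_k=\lambda^{k-1}(1-\lambda)P_{k-1}/(P_k+P_{k-1})$ reduces everything to a geometric sum; since $\max_i\varphi'_i\ge\sum_k p_k\varphi'_k/\sum_k p_k$ and $\sum_k p_k=\sqrt{1-\lambda^N}\le 1$, the claim follows. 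The tail bookkeeping you flag is indeed routine: $P_k/P_{k-1}\le\sqrt{1+1/(k-1)}$ gives a deficiency of order $1/k$ per term, whose total weighted contribution is $O\bigl((1-\lambda)\log\frac{1}{1-\lambda}\bigr)=o(\varepsilon)$ because $1-\lambda<\varepsilon^6$; incidentally there is no drift near $k=N$ (there the ratio is already $\tfrac12+o(1)$), so only the small-$k$ boundary needs this estimate, and the degenerate case $c_k=0$ makes the maximum infinite, so it is vacuous. What each approach buys: yours is shorter and avoids the compactness/uniqueness and recurrence analysis entirely, but the paper's variational argument produces the explicit extremal sequence $d_k$, which is not a by-product you can discard here --- it is reused verbatim in the construction (\ref{mknkdef}) proving part (ii) of Theorem \ref{maintheor2}, so the paper would still need the recurrence (\ref{recal}) and its asymptotics even if it adopted your proof of the lower bound.
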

\begin{proof}
The proof of the lemma will be also splitted into several steps. First, using the substitution $d_j=\sqrt{\lambda}^j c_j$, we write (\ref{phikprimedef}) as
\begin{equation}
\label{recsource}
\tilde{\varphi}'_i(D):=\varphi'_i(C)=(1-\lambda)\sum\limits_{k=i+1}^{N}\frac{\lambda^{k}}{d_k}+\eta d_i.
\end{equation}
Here $D=(d_1,d_2,\ldots,d_N)$. Denote $\tilde{\varphi}'_{max}(D)=\max\limits_{1\le k\le N}\tilde{\varphi}'_k(D)$. In order to prove the lemma it is enough for us to show that 
\begin{equation}
\label{PhiCmin}
\min\limits_{D\in\mathbb{R}^N_{+}}\tilde{\varphi}_{max}(D)=\sqrt{2\eta}(1+o(\varepsilon)).
\end{equation}
Denote the minimum of (\ref{PhiCmin}) by $y_{min}$.
\begin{lem}
\label{minmaxphilem}
Suppose that $\tilde{\varphi}'_{max}(D)=y_{min}$ for some $D\in\mathbb{R}^N_{+}$. Then for all $1\le k\le N$ one has $\tilde{\varphi}'_k(D)=y_{min}$.
\end{lem}
\begin{proof}
Suppose that $\tilde{\varphi}'_{max}(D)=y_{min}$, but for some $i$ one has $\tilde{\varphi}'_i(D)<y_{min}$. We call the index $n$ \textit{minimizing} if $\varphi'_n(D)=y_{min}$. Let $k$ be the largest non-minimizing index. Without loss of generality one can say that all indices less than $k$ are not minimizing too. Indeed, consider the sequence
$$
D'=(d_1,\ldots,d_{k-1},d_k+\delta,d_{k+1},\ldots,d_N),
$$
where $\delta>0$ is some small parameter. One can easily see that $\tilde{\varphi}'_i(D')<\tilde{\varphi}'_i(D)$ for $i<k$, $\tilde{\varphi}'_i(D')>\tilde{\varphi}'_i(D)$ for i$=k$ and 
$\tilde{\varphi}'_i(D')=\tilde{\varphi}'_i(D)$ for $i>k$. As $k$ is a non-minimizing index, there exists $\delta>0$ such that $\tilde{\varphi}'_i(D')<y_{min}$ for all $i\le k$.

Thus, $k+1$ is the smallest minimizing index. As all indices less than $k+1$ are not minimizing, there exists $\delta>0$ such that for the sequence
$$
D''=(d_1,\ldots,d_k,d_{k+1}-\delta,d_{k+2},\ldots,d_N)
$$
one has $y_{min}>\tilde{\varphi}'_i(D'')>\tilde{\varphi}'_i(D)$ for $i\le k$ and $\tilde{\varphi}'_i(D'')<\tilde{\varphi}'_i(D)=y_{min}$ for $i=k+1$. Thus, we obtained the sequence whose smallest minimizing index is at least $k+2$. Repeating this argument we obtain the sequence $D^{(N)}=(d^{(N)}_1,d^{(N)}_2,\ldots,d^{(N)}_N)$ with the smallest minimizing index equal to $N$. One can easily see that for the sequence
$$
D'^{(N)}=(d^{(N)}_1,d^{(N)}_2,\ldots,d^{(N)}_N-\delta)
$$
for $\delta>0$ small enough one has $\tilde{\varphi}'_{max}(D'^{(N)})<\tilde{\varphi}'_{max}(D^{(N)})=y_{min}$ and we obtain a contradiction with the definition of $y_{min}$. Lemma is proved.
\end{proof}
\begin{lem}
There exists a unique sequence $D=(d_1,d_2,\ldots, d_N)$ such that $\tilde{\varphi}'_{max}(D)=y_{min}$. The elements of this sequence satisfy the recurrent equation 
\begin{equation}
\label{recal}
d_{k+1}=\frac{d_k+\sqrt{d_k^2+\frac{4(1-\lambda)\lambda^{k+1}}{\eta}}}{2}
\end{equation}
with the initial condition $d_1=0$.
\end{lem}
\begin{proof}
Lemma \ref{minmaxphilem} implies that $\tilde{\varphi}'_N(D)=y_{min}$. This fact yields a linear equation $\eta d_N=y_{min}$ from which $d_N$ is uniquely defined. Then we substitute $d_N$ to the equation $\tilde{\varphi}'_{N-1}(D)=y_{min}$ and find $d_{N-1}$ etc. Therefore the sequence $D$ is uniquely defined.

Suppose that $d_1>0$. But if we decrease $d_1$, we would also decrease $\tilde{\varphi}'_1(D)$, but $\tilde{\varphi}'_{max}(D)$ would be still equal to $y_{min}$. We obtain a contradiction with the uniqueness of $D$. Finally, from the equation $\tilde{\varphi}'_{k+1}(D)=\tilde{\varphi}'_{k}(D)$ one can derive
\begin{equation}
\label{ckqadreq}
\eta(d_{k+1}-d_k)=(1-\lambda)\frac{\lambda^{k+1}}{d_{k+1}}.
\end{equation}
Considering (\ref{ckqadreq}) as the quadratic equation on $d_{k+1}$ and choosing the positive root, we obtain (\ref{recal}). Lemma is proved.
\end{proof}
Thus, $y_{min}=\frac{d_N}{\eta}$ where $d_N$ can be evaluated from the recurrent equations (\ref{recal}).
Multiplying both sides of (\ref{recal}) by $\sqrt{\eta}$, we obtain
\begin{equation}
\label{recalnew}
\sqrt{\eta}d_{k+1}=\frac{\sqrt{\eta}d_k+\sqrt{(\sqrt{\eta}d_k)^2+4(1-\lambda)\lambda^{k+1}}}{2}.
\end{equation}
Put $e_k=\sqrt{\eta}d_k$. Using the introduced notation, one can write (\ref{recalnew}) as follows:
\begin{equation}
\label{rec1}
e_{k+1}=\frac{e_k+\sqrt{e_k^2+4(1-\lambda)\lambda^{k+1}}}{2}.
\end{equation}
Denote\footnote{Equation (\ref{rec1}) is equivalent to $\frac{e_{k+1}-e_k}{\delta_{k+1}}=\frac{1}{e_{k+1}}$. Thus, (\ref{rec1}) might be considered as numerical integration of the differential equation $y'=\frac{1}{y}$ on the non-uniform grid $X_i$. I am thankful to I. Mitrofanov who drew my attention to this fact.} $\delta_k=(1-\lambda)\lambda^{k}$ и $X_n=\sum\limits_{k=1}^n\delta_n$.

\begin{lem}
\label{dnconv}
For $n\ge 1$ one has $e_n<\sqrt{2X_n}$ and $e_{n+1}-e_n>\sqrt{2X_{n+2}}-\sqrt{2X_{n+1}}$.
\end{lem}
\begin{proof}
The first statement is proved by induction. For $n=1$ one can easily verify the inequality. As $\sqrt{x}$ is a convex function, one can easily see that
\begin{equation}
e_{n+1}=\frac{e_n+\sqrt{e_n^2+4\delta_{n+1}}}{2}<\frac{\sqrt{2X_n}+\sqrt{2X_n+4\delta_{n+1}}}{2}<\sqrt{2X_n+2\delta_{n+1}}=\sqrt{2X_{n+1}}.
\end{equation}

Now we prove the second statement. Note that
\begin{equation}
e_{n+1}-e_n=\frac{\sqrt{e_n^2+4\delta_{n+1}}-e_n}{2}=\frac{2\delta_{n+1}}{e_n+\sqrt{e_n^2+4\delta_{n+1}}}=\frac{\delta_{n+1}}{e_{n+1}}.
\end{equation}
On the other hand, as we already showed, $\sqrt{2X_{n+2}}>\sqrt{2X_{n+1}}>e_{n+1}$ and therefore
\begin{equation}
\sqrt{2X_{n+2}}-\sqrt{2X_{n+1}}=\frac{2\delta_{n+2}}{\sqrt{2X_{n+2}}+\sqrt{2X_{n+1}}}<\frac{\delta_{n+1}}{e_{n+1}}=e_{n+1}-e_n.
\end{equation}
Lemma is proved.
\end{proof}
Now we are ready to prove Lemma \ref{sumlemprime}. By Lemma \ref{dnconv}, $\sqrt{2X_{n+1}}-e_n$ forms a decreasing sequence of positive real numbers. Hence 
$$
0<\sqrt{2X_N}-e_{N-1}<\sqrt{2X_2}-e_1<2\sqrt{X_2}=\sqrt{2(1-\lambda)(\lambda+\lambda^2)}<2\sqrt{1-\lambda}=o(\varepsilon^{2}).
$$
On the other hand, $2X_N=2\sum\limits_{k=1}^N (1-\lambda)\lambda^{k}=2(1-\lambda^{N+1})=2+o(\varepsilon^5)$.
Thus, $e_{N-1}=\sqrt{2}+o(\varepsilon^{2})$. It also follows from (\ref{rec1}) that $e_{N}=\sqrt{2}+o(\varepsilon^{2})$. Hence,
$$
y_{min}=\eta{d_N}=\sqrt{\eta}e_N=\sqrt{2\eta}+o(\varepsilon^{2}).
$$
Lemma is proved.
\end{proof}
\section{Proof of Theorem \ref{maintheor1} and the first statement of Theorem \ref{maintheor2}}
Now we are ready to prove Theorem \ref{maintheor1}.
\begin{proof}
From (\ref{bigsumfullsimple}) and Lemma \ref{sumlem} one can deduce that for all $t_0$ large enough there exist integer numbers $i$ and $m_i$ satisfying $1\le i\le  N$ and $\frac{t_0}{\log{t_0}}<m_i\le t_0$ such that
\begin{equation}
\label{phi1final}
\varphi^{(1)}_x(m_i)\ge\sqrt{8\eta}\alpha\sqrt{t_{i-1}\log{t_0}}(1+o(\varepsilon))=2\sqrt{2}\kappa_4\sqrt{t_{i-1}\log{t_0}}(1+o(\varepsilon)).
\end{equation}
As $t_i<m_i\le t_{i-1}$ and $\frac{t_{i}}{t_{i-1}}=\lambda=1+o(\varepsilon^5)$, from (\ref{phi1final}) one has
$$
\varphi^{(1)}_x(m_i)\ge2\sqrt{2}\kappa_4\sqrt{m_i\log{m_i}}(1+o(\varepsilon)).
$$
Theorem is proved.
\end{proof}
Let us now prove the first statement of Theorem \ref{maintheor2}.
\begin{proof}
It follows from  (\ref{bigsumfullsimpleprime}) and Lemma \ref{sumlemprime} that for all $t_0$ large enough there exist integer numbers $i$ and $m_i$ satisfying $1\le i\le  N$ and $\frac{t_0}{\log{t_0}}<m_i\le t_0$ such that
$$
\sqrt{\lambda}^i\varphi^{(1)}_x(m_i)\ge\sqrt{2\eta}\alpha\sqrt{t_i\log{t_0}}(1+o(\varepsilon))=\sqrt{2}\kappa_4\sqrt{t_i\log{t_0}}(1+o(\varepsilon)).
$$
In other words,
$$
\max_{u\le t_0}\varphi^{(1)}_{x}(u)\ge\varphi^{(1)}_x(m_i)\ge\sqrt{2}\kappa_4\sqrt{t_0\log{t_0}}(1+o(\varepsilon)).
$$
The first statement of Theorem \ref{maintheor2} is proved.
\end{proof}

\section{Proof of the second statement of Theorem \ref{maintheor2}}
\subsection{Superblocks definition}
\label{superbsec}
\begin{proof}
In this chapter we will construct an irrational number $x=[0;a_1,\ldots,a_n,\ldots]$ that will satisfy the conditions of the second statement of Theorem \ref{maintheor2}. The continued fraction of $x$ will have the form
\begin{equation}
\label{xstruc0}
x=[0;\mathfrak{B}^{(0)},\mathfrak{B}^{(1)},\mathfrak{B}^{(2)},\ldots,\mathfrak{B}^{(n)},\ldots],
\end{equation}
where the segments $\mathfrak{B}^{(i)}$ will be defined later. We will call these segments \textit{superblocks}. Recall that the sequence $d_k$ is defined from the equations (\ref{recal}) with the initial condition $d_1=0$. It will be convenient for us to modify the first element of this sequence. We set $d_1=d_2=\sqrt{\frac{(1-\lambda)\lambda}{\eta}}=o(\varepsilon^5)$.

Let us now define the sequence $T_i$ that plays the key role in our construction. We choose an arbitrary integer $T_1$, satisfying $\lambda^NT_1>\frac{T_1}{\log{T_1}}$. Then, if the number $T_{i-1}$ is defined, we put $T_{i}=\bigl[\frac{T_{i-1}}{\lambda^N}\bigr]$. For each $i\ge 1$ put $t^{(i)}_0=T_i$.

Now we describe the construction of the superblock $\mathfrak{B}^{(i)}$ for an arbitrary positive integer $i$. For each $k$ such that $1\le k\le N$ we select three natural numbers $m^{(i)}_k, n^{(i)}_k$ и $t^{(i)}_k$ from the following conditions:
\begin{equation}
\begin{split}
\label{mknkdef}
d_k\sqrt{T_i\log{T_i}}\le m^{(i)}_k \le d_k\sqrt{T_i\log{T_i}}(1+\varepsilon^4)\\
\frac{d_k}{\kappa_1-1}\sqrt{T_i\log{T_i}}\le n^{(i)}_k\le \frac{d_k}{\kappa_1-1}\sqrt{T_i\log{T_i}}(1+\varepsilon^4)\\
\frac{\log{T_i}}{\log{2}}(1+\frac{\varepsilon}{8}-\varepsilon^3)\le m^{(i)}_k+n^{(i)}_k-\kappa_1 (n^{(i)}_k+1) \le\frac{\log{T_i}}{\log{2}}(1+\frac{\varepsilon}{8}+\varepsilon^3)\\
t^{(i)}_k=\lambda^k t^{(i)}_0+\theta (n^{(i)}_k+1),\ \text{where}\ |\theta|\le\frac{1}{2}, \quad (n^{(i)}_k+1)\mid (t^{(i)}_{k-1}-t^{(i)}_k).
\end{split}
\end{equation}
One can easily see that the numbers $m^{(i)}_k, n^{(i)}_k, t^{(i)}_k$, satisfying conditions (\ref{mknkdef}) always exists. Consider the block $B^{(i)}_k=(a_{t^{(i)}_k+1},\ldots,a_{t^{(i)}_{k-1}})$ having the following structure:
\begin{equation}
\label{B1kstruct}
B^{(i)}_k=(m^{(i)}_k,\underbrace{1,1,\ldots,1}_{n^{(i)}_k \text{\ numbers}},m^{(i)}_k,\underbrace{1,1,\ldots,1}_{n^{(i)}_k \text{\ numbers}},\ldots,m^{(i)}_k,\underbrace{1,1,\ldots,1}_{n^{(i)}_k \text{\ numbers}}).
\end{equation} 
We denote the sequence of $N$ blocks $(B^{(i)}_N,B^{(i)}_{N-1},\ldots,B^{(i)}_1)$  by $\mathfrak{B}^{(i)}$. For the initial superblock $\mathfrak{B}^{(0)}=(a_1,a_2,\ldots,a_{t^{(1)}_N})$ we set all its elements to be equal to $1$. Thus, the construction of the continued fraction (\ref{xstruc0}) is fully described.
Note that the initial superblock $\mathfrak{B}^{(0)}$ has fixed length and therefore does not affect neither the value of $?'(x)$ nor the behavior of $\varphi^{(1)}_x(t)$ as $t$ grows. 
\subsection{$?'(x)=0$}
As we already mentioned it is enough to show that for $x'=[0;\mathfrak{B}^{(1)},\mathfrak{B}^{(2)},\ldots,\mathfrak{B}^{(n)},\ldots])$ one has $?'(x')=0$. Denote the elements of continued fraction expansion of $x'$ by $a'_1,a'_2,\ldots$. By Lemma \ref{derlemsym} it is enough to show that the function
\begin{equation}
\label{bikfract2}
f_{x'}(t)=\frac{\langle a'_1,a'_2,\ldots,a'_t\rangle}{\sqrt{2}^{a'_1+a'_2+\ldots+a'_t}}
\end{equation}
tends to $0$ as $t\to\infty$.  One can easily see that the function $f_{x'}(t)$ grows if $a'_t=1$ and decreases if $a'_t\ge 5$. Of course, all $m^{(i)}_k$ are greater than $5$. Thus it is enough to consider only the case when $a'_t=1$, but $a'_{t+1}\ge 5$. In this case the continuant $\langle a'_1,a'_2,\ldots,a'_t\rangle$ consists of the sequences of the form  $(m^{(i)}_l,\underbrace{1,1,\ldots,1}_{n^{(i)}_l \text{\ numbers}})$. From the formula $\langle A,B\rangle\le 2\langle A\rangle\langle B\rangle$ and Lemma \ref{derlemsym} one can easily see that if we show that
\begin{equation}
\label{bikfract3orig}
\frac{2\langle m^{(i)}_l,\underbrace{1,1,\ldots,1}_{n^{(i)}_l \text{\ numbers}}\rangle}{\sqrt{2}^{m^{(i)}_l+n^{(i)}_l}}<\frac{1}{2},
\end{equation}
we will obtain the fact that $?'(x)=0$. Note that $4\langle m^{(i)}_l,\underbrace{1,1,\ldots,1}_{n^{(i)}_l \text{\ numbers}}\rangle<8m^{(i)}_l\Phi^{n^{(i)}_l}$. From the definition (\ref{mknkdef}) one can easily see that $m^{(i)}_l+n^{(i)}_l=\kappa_1 n^{(i)}_l+\frac{\log{T_i}}{\log{2}}(1+\frac{\varepsilon}{8}+o(\varepsilon^2))$. Thus, (\ref{bikfract3orig}) is equivalent to the following:
\begin{equation}
\label{bikfract3}
\frac{8m^{(i)}_l\Phi^{n^{(i)}_l}}{\sqrt{2}^{\kappa_1 n^{(i)}_l+\frac{\log{T_i}}{\log{2}}(1+\frac{\varepsilon}{8}+o(\varepsilon^2))}}=\frac{8m^{(i)}_l}{\sqrt{2}^{\frac{\log{T_i}}{\log{2}}(1+\frac{\varepsilon}{8}+o(\varepsilon^2))}}<1.
\end{equation}
Taking logarithm of both sides of (\ref{bikfract3}) and substituting $m^{(i)}_l$ from (\ref{mknkdef}) , we obtain that
\begin{equation}
\label{bikfract4}
\frac{\log{T_i}}{2}\Bigl(1+o(\varepsilon^2)\Bigr)<\frac{\log{T_i}}{\log{2}}\Bigl(1+\frac{\varepsilon}{8}+o(\varepsilon^2)\Bigr)\log{\sqrt{2}}=\frac{\log{T_i}}{2}\Bigl(1+\frac{\varepsilon}{8}+o(\varepsilon^2)\Bigr).
\end{equation}
Therefore, $?'(x)=0$.
\subsubsection{The inequality (\ref{mainineqth22}) is satisfied}
We will show that for $x$ that we built in Section \ref{superbsec} for all $t=T_i$, $i\ge 1$ the inequality (\ref{mainineqth22}) is satisfied. This inequality is equivalent to the following:
\begin{equation}
\label{mainineqprimecontranew}
\varphi^{(1)}_x(\nu)=S_x(\nu)-\kappa_1 \nu\le (\sqrt{2}+\varepsilon)\kappa_4\sqrt{t\log t} \quad \forall \nu\le t.
\end{equation}
We will prove (\ref{mainineqprimecontranew}) by induction on $i$. Suppose that $t=T_1=t^{(1)}_0$. As the segment $\mathfrak{B}^{(0)}=(a_1,a_2,\ldots,a_{ t^{(1)}_N)}$ has length $\bigl(o(\varepsilon^6)\bigr)t$, we will not take it into account in our further argument. 

One can easily see that $\varphi_1(\nu)>\varphi_1(\nu-1)$ if and only if $a_\nu>1$. Thus, it is enough to verify the inequality (\ref{mainineqprimecontranew})  only in the case when $a_\nu>1$. Suppose that $t^{(1)}_{i}<\nu\le t^{(1)}_{i-1}$ It follows from the definition (\ref{mknkdef}) that
$$
m^{(1)}_i+n^{(1)}_i-\kappa_1 (n^{(1)}_i+1)>0.
$$
Hence for any finite sequence $B$ and for any $1\le j\le N$ one has
$$
\varphi^{(1)}(B,\underbrace{1,\ldots,1}_{n^{(1)}_j \text{\ numbers}},m^{(1)}_j)>\varphi^{(1)}(B).
$$ 
Thus, it is enough to verify (\ref{mainineqprimecontranew}) only for the largest  $t^{(1)}_{k}<\nu\le t^{(1)}_{k-1}$ such that $a_{\nu}>1$. In this case we have
\begin{equation}
\begin{split}
\label{mainineqprimecontranew2}
\varphi^{(1)}_x(\nu)\le\sum\limits_{i=k}^N\varphi^{(1)}(B^{(1)}_i)+m^{(1)}_k=
\sum\limits_{i=k}^N\biggl(\frac{T_1(\lambda^{i-1}-\lambda^i)}{n^{(1)}_i+1}\frac{\log{T_1}}{\log{2}}\Bigl(1+\frac{\varepsilon}{8}\Bigr)\biggr)\bigl(1+o(\varepsilon^2)\bigr)+m^{(1)}_k .
\end{split}
\end{equation}
This can be written as follows:
\begin{equation}
\label{mainineqprimecontranew3}
\varphi^{(1)}_x(\nu)\le\Biggl(\frac{(1-\lambda)(\kappa_1-1)}{\log{2}}\Bigl(1+\frac{\varepsilon}{8}\Bigr)\sum\limits_{i=k+1}^N\frac{\lambda^{i-k}}{d_i}+d_k\Biggr)\sqrt{T_1\log{T_1}}\bigl(1+o(\varepsilon^2)\bigr).
\end{equation}
Denote $\alpha=\frac{\kappa_1-1}{\log{2}}(1+\frac{\varepsilon}{8})$, $\eta=\frac{1}{\alpha}$. Using the introduced notation we have
\begin{equation}
\label{mainineqprimecontranew31}
\varphi^{(1)}_x(\nu)\le\Biggl((1-\lambda)\sum\limits_{i=k+1}^N\frac{\lambda^{i-k}}{d_i}+\eta d_k\Biggr)\alpha\sqrt{T_1\log{T_1}}\bigl(1+o(\varepsilon^2)\bigr).
\end{equation}
We recall that $d_1, d_2,\ldots,d_N$ is the minimizing sequence for (\ref{recsource}). Thus from Lemma \ref{minmaxphilem} we obtain that
\begin{equation}
\label{mainineqprimecontranew32}
\varphi^{(1)}_x(\nu)\le\alpha\sqrt{2\eta}\sqrt{T_1\log{T_1}}(1+o(\varepsilon))=\sqrt{2\alpha}\sqrt{T_1\log{T_1}}(1+o(\varepsilon))\le  \biggl(\sqrt{2}+\frac{\varepsilon}{2} \biggr)\kappa_4\sqrt{T_1\log{T_1}}.
\end{equation}
Thus the inequality (\ref{mainineqprimecontranew}) is satisfied for $t=T_1$. Using the same argument one can derive that for all $n\in\mathbb{N}$ one has
\begin{equation}
\label{mainineqprimecontranew3fori}
\varphi^{(1)}(\mathfrak{B^{(n)}})\le \biggl(\sqrt{2}+\frac{\varepsilon}{2}\biggr)\kappa_4\sqrt{T_n\log{T_n}}
\end{equation}
and for all $1\le k\le N$, $n\in\mathbb{N}$
\begin{equation}
\label{mainineqprimecontranew3fori2}
\sum\limits_{i=k}^N\varphi^{(1)}(B^{(n)}_i)+m^{(n)}_k\le \biggl(\sqrt{2}+\frac{\varepsilon}{2}\biggr)\kappa_4\sqrt{T_n\log{T_n}}.
\end{equation}
Now we show that (\ref{mainineqprimecontranew}) is satisfied for $t=T_n$ when $n\ge 2$. Using the same argument, one can deduce
\begin{equation}
\label{mainineqprimecontranew2long}
\varphi^{(1)}_x(\nu)\le\sum\limits_{i=1}^{n-1}\varphi_1(\mathfrak{B}^{(i)})+\sum\limits_{i=k}^N\varphi^{(1)}(B^{(n)}_i)+m^{(n)}_k
\end{equation}
for some $1\le k\le N$. From (\ref{mainineqprimecontranew3fori}) it follows that
\begin{equation}
\begin{split}
\label{mainineqprimecontranew3fori3}
\sum\limits_{i=1}^{n-1}\varphi_1(\mathfrak{B}^{(i)})\le\biggl(\sqrt{2}+\frac{\varepsilon}{2}\biggr)\kappa_4\sum\limits_{i=1}^{n-1}\sqrt{T_i\log{T_i}}\le
\biggl(\sqrt{2}+\frac{\varepsilon}{2}\biggr)\kappa_4\sqrt{\log{T_n}}\sum\limits_{i=1}^{n-1}\sqrt{T_i}=\\
=\biggl(\sqrt{2}+\frac{\varepsilon}{2}\biggr)\kappa_4\sqrt{T_n\log{T_n}}\sum\limits_{i=1}^{n-1}\sqrt{\lambda}^{Ni}\le
\biggl(\sqrt{2}+\frac{\varepsilon}{2}\biggr)\kappa_4\sqrt{T_n\log{T_n}}\frac{\sqrt{\lambda}^{N}}{1-\sqrt{\lambda}^{N}}=\biggl(\sqrt{T_n\log{T_n}}\biggr)o(\varepsilon^4).
\end{split}
\end{equation}
Substituting the estimates (\ref{mainineqprimecontranew3fori2}) and (\ref{mainineqprimecontranew3fori3}) to (\ref{mainineqprimecontranew2long}) we obtain that
\begin{equation}
\label{finalest}
\varphi^{(1)}_x(\nu)\le \biggl(\sqrt{2}+\frac{2\varepsilon}{3}\biggr)\kappa_4\sqrt{T_n\log{T_n}}
\end{equation}
and the second statement of Theorem \ref{maintheor2} is proved.
\end{proof}
\textbf{Acknowledgements:} I would like to thank Igor Kan for fruitful discussions. Most results of this paper are based on his ideas.

Dmitry Gayfulin,\\
Steklov Mathematical Institute of Russian Academy of Sciences\\
ul. Gubkina, 8, Moscow, Russia, 119991\\
\textit{gamak.57.msk@gmail.com}\\

\end{document}